\pdfoutput=1
\documentclass[10pt]{amsart}
\usepackage{amssymb}
\usepackage{mathrsfs}                     
\usepackage{mathtools}
\mathtoolsset{showonlyrefs}
\usepackage[hyperindex=true,bookmarks=true,bookmarksnumbered=true]{hyperref}
\usepackage[all]{xy}
\allowdisplaybreaks

\def\undersetbrace#1\to#2{\underbrace{#2}_{#1}}
\def\oversetbrace#1\to#2{\overbrace{#2}^{#1}}
\def\AMSunderset#1\to#2{\underset{#1}{#2}}
\def\AMSoverset#1\to#2{\overset{#1}{#2}}

\swapnumbers
\newtheorem{proposition}[subsection]{Proposition}
\newtheorem*{proposition*}{Proposition}
\newtheorem{theorem}[subsection]{Theorem}
\newtheorem*{theorem*}{Theorem}
\newtheorem*{maintheorem*}{Main Theorem}
\newtheorem{maintheorem}[subsection]{Main Theorem}
\newtheorem{lemma}[subsection]{Lemma}
\newtheorem*{lemma*}{Lemma}

\newtheorem*{corollary*}{Corollary}

\usepackage[hyperindex=true,bookmarks=true,bookmarksnumbered=true]{hyperref}
 
\theoremstyle{definition}
\newtheorem*{remark*}{Remark}
 
\def\ign#1{}             
\def\o{\circ}
\def\X{\mathfrak X}
\def\al{\alpha}
\def\be{\beta}
\def\ga{\gamma}
\def\de{\delta}

\def\io{\iota}

\def\ta{\tau}
\def\ph{\varphi}

\def\ps{\psi}
\def\om{\omega}
\def\Ga{\Gamma}
\def\De{\Delta}

\def\La{\Lambda}

\def\Om{\Omega}
\def\i{^{-1}}
\def\x{\times}
\def\p{\partial}
\let\on=\operatorname
\def\L{\mathcal L}
\def\Diff{\on{Diff}}
\def\Dens{\on{Dens}}
\def\Prob{\on{Prob}}

\def\R{{\mathbb R}}

\let\mc=\mathcal
\let\mf=\mathfrak

\begin{document}
\title[]
{Determination of all diffeomorphism invariant tensor fields on the space of
  smooth positive densities on a compact manifold with corners
}
\author{Peter W. Michor}
\address{
Peter W.\ Michor: Fakult\"at f\"ur Mathematik,
Universit\"at Wien, Os\-kar-Mor\-gen\-stern-Platz 1, A-1090 Wien, Austria.}
\email{peter.michor@univie.ac.at}
 
\date{{\today} } 
 
\thanks{MB was supported by `Fonds zur
F\"orderung der wissenschaftlichen                    
Forschung, Projekt P~24625'} 
\keywords{Fisher--Rao Metric; Information Geometry;  Invariant Metrics; Space of Densities; 
Groups of Diffeomorphisms}
\subjclass[2010]{Primary 58B20, 58D15} 
 
\begin{abstract} It was proved in \cite{BBM2016} that
on a closed manifold of dimension greater than one, every smooth weak Riemannian metric on the space of smooth positive probability densities, that is invariant under the action of the diffeomorphism group, is a multiple of the 
Fisher--Rao metric. Here we extend this result to compact manifolds with corners, and we determine all diffeomorphism invariant tensor fields on the space of smooth positive probability densities.
\end{abstract}
\def\LaTeXonly{}
 
\maketitle

\section{Introduction} 
 
The Fisher--Rao metric on the space $\on{Prob}(M)$ of probability densities is invariant under the action of the diffeomorphism group $\Diff(M)$.
Restricted to finite-dimensional submanifolds of $\on{Prob}(M)$, 
so-called statistical manifolds, it is called Fisher's information metric \cite{Ama1985}. 
A uniqueness result was established \cite[p. 156]{Cen1982} for Fisher's information 
metric on finite sample spaces and \cite{AJLS2014} extended it to infinite sample spaces.       
The Fisher--Rao metric on the infinite-dimensional manifold of all positive probability densities 
was studied in \cite{Fri1991}, including the computation of its curvature. 
In \cite{BBM2016} it was proved that the Fisher--Rao metric on $\Prob(M)$ is, up to a multiplicative constant, the unique $\Diff(M)$-invariant metric, on a compact manifold without boundary. In fact, all $\Diff(M)$-invariant bilinear tensor fields on the space $\on{Dens}_+(M)$ of all positive smooth densities were determined. Here we extend this result to compact smooth manifolds with corners and we also determine all $\Diff(M)$-invariant tensor fields on $\on{Dens}_+(M)$ of all orders. 
The changes required in the proof are quite subtle.

\subsection{The Fisher--Rao metric}
Let $M^m$ be a smooth compact connected manifold without boundary. 
Let $\on{Vol}(M)\to M$ be the 
the line bundle of smooth densities whose cocycle of transition functions is 
$|\det(d(u_a\o u_b\i))|\i\o u_b$ for any smooth atlas $(u_a:U_a \to \mathbb R^m)_{a\in A}$;  for more details 
 we refer to \cite{BBM2016}. Moreover, we denote by $|\quad|:\La^mT^*M\to \on{Vol}(M)$ the fiber respecting absolute value mapping. 
We let $\on{Dens}_+(M)$ denote the space of smooth positive densities on $M$, i.e., 
$\on{Dens}_+(M) = 
\{ \mu \in \Ga(\on{Vol}(M)) \,:\, \mu(x) > 0\; \forall x \in M\}$. Let $\on{Prob}(M)$ be the 
subspace of positive densities with integral 1 on $M$. Both spaces are smooth Fr\'echet manifolds, 
in particular they are open subsets of the affine spaces of all densities or densities of integral 
1, respectively. For $\mu \in \on{Dens}_+(M)$ we have    
$ T_\mu \on{Dens}_+(M) = \Ga(\on{Vol}(M))$ and for $\mu\in \Prob(M)$ we have 
$$
T_\mu\Prob(M)=\{\al\in \Ga(\on{Vol}(M)): \int_M\al =0\}.
$$
The Fisher--Rao metric is a Riemannian metric on $\on{Prob}(M)$ and is defined as follows:
$$
G^{\operatorname{FR}}_\mu(\al,\be) = \int_M \frac{\al}{\mu}\frac{\be}{\mu}\mu.
$$
This metric is invariant under the associated action of $\Diff(M)$ on $\Prob(M)$, since
$$
\Big((\ph^*)^*G^{\operatorname{FR}}\Big)_\mu(\al,\be) = G^{\operatorname{FR}}_{\ph^*\mu}(\ph^*\al,\ph^*\be) 
= \int_M \Big(\frac{\al}{\mu}\o \ph\Big)\Big(\frac{\be}{\mu}\o \ph\Big)\ph^*\mu
= \int_M \frac{\al}{\mu}\frac{\be}{\mu}\mu\,.
$$
 
The uniqueness result for the Fisher--Rao metric follows from the following classification of 
$\on{Diff}(M)$-invariant bilinear forms on $\on{Dens}_+(M)$.

\begin{theorem*} {\rm \cite{BBM2016}} Let $M$ be a compact manifold without boundary of dimension $\geq 2$.
Let $G$ be a smooth (equivalently, bounded) bilinear form on $\Dens_+(M)$ which is invariant under the action of 
$\Diff(M)$. Then 
$$
G_\mu(\al,\be)=C_1(\mu(M)) \int_M \frac{\al}{\mu}\frac{\be}{\mu}\,\mu + C_2(\mu(M))  \int_M\al \cdot \int_M\be
$$
for some smooth functions $C_1,C_2$ of the total volume $\mu(M)$.  
\end{theorem*}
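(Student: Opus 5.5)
Since $G$ is bounded and bilinear, for each fixed $\mu$ the form $G_\mu$ is a continuous bilinear form on $\Ga(\on{Vol}(M))$. By the Schwartz kernel theorem it is therefore given by a distributional kernel $K_\mu\in\mathcal D'(M\x M)$. The plan has four steps.

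\textbf{Step 1: reduce to a fixed volume form.} By Moser's theorem, any two positive densities with the same total mass are related by a diffeomorphism. Invariance then reduces everything to the study of $G_{c\mu_0}$ for one fixed $\mu_0$ and scalars $c>0$. It also shows that $G_{c\mu_0}$ is invariant under the group $\Diff_{\mu_0}(M)$ of volume-preserving diffeomorphisms, acting by pullback on both arguments. Writing $\al=f\mu_0$ and $\be=g\mu_0$, we obtain a continuous bilinear form $B(f,g)$ on $C^\infty(M)$ that is invariant under $f\mapsto f\o\ph$ for $\ph\in\Diff_{\mu_0}(M)$.

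\textbf{Step 2: localize the kernel.} Differentiate the invariance along divergence-free vector fields $X$. This gives
$$B(Xf,g)+B(f,Xg)=0 \qquad\text{for all } X \text{ with } \on{div}_{\mu_0}X=0.$$
Away from the diagonal, divergence-free fields can be chosen to act independently near $x$ and near $y$. Taking $f$ supported near $x$ and $g$ supported near $y$, one field moves $f$ while leaving $g$ fixed. Because $\dim M\ge2$, the divergence-free fields span $T_xM$ at each point, and they can be cut off compactly (e.g.\ locally as $\on{div}$ of a compactly supported bivector). It follows that the kernel restricted to $(M\x M)\setminus\De$ is invariant in each variable separately under a transitive family of flows. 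Hence it equals a constant $C_2$ times $\mu_0\otimes\mu_0$ off the diagonal, and $K-C_2\,\mu_0\otimes\mu_0$ is supported on the diagonal $\De$.

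\textbf{Step 3: analyze the diagonal part (main obstacle).} A distribution supported on $\De$ has locally finite order. In local coordinates it is a finite sum $\sum_\al \p^\al_y\de(x-y)\,a_\al(x)$, so the remaining form is $B'(f,g)=\int \sum a_{\al\be}\,\p^\al f\,\p^\be g\,\mu_0$, a bidifferential operator. Invariance under $\Diff_{\mu_0}$ is much stronger than invariance under translations. The volume-preserving diffeomorphisms fixing a point $x$ realize arbitrary jets with $\on{SL}$-type linear part, and they can also realize nonlinear higher jets (divergence-free jets), as long as $\dim\ge2$. I would show that the top-order coefficient tensor must be invariant under $\on{SL}(m)$ together with these higher-jet actions. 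The higher-jet actions mix orders, and I would use this mixing to kill every coefficient except the zeroth-order one. Then transitivity forces $a_{00}$ to be constant. This jet-by-jet elimination is where the real work lies. The case of first derivatives, where terms like $\int f\,Xg$ could survive, needs separate care: antisymmetric first-order terms must be excluded using that no nonzero invariant vector field exists.

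\textbf{Step 4: assemble.} We are left with $G_{c\mu_0}(\al,\be)=C_1\int fg\,\mu_0+C_2\int\al\int\be$. Rewriting in terms of $\mu=c\mu_0$, the constants become functions of $c$, that is, of $\mu(M)$. Smoothness of these functions follows from the smoothness of $G$ along the curve $c\mapsto c\mu_0$.
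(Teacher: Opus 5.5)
\textbf{Where the gap is.} Your Steps 1, 2 and 4 follow the paper's route: Moser reduction, the Schwartz kernel, and the off-diagonal part forced to be $C\,\mu_0\otimes\mu_0$ by exact divergence-free fields. Note that this last step also uses that $(M\x M)\setminus\De$ is connected, which needs $m\ge2$. The genuine gap is Step 3. It is a plan rather than an argument, and the mechanism you propose does not work as stated.

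\textbf{Why the proposed mechanism fails.} After your first sentence you switch to a two-sided form $\int\sum a_{\al\be}\,\p^\al f\,\p^\be g\,\mu_0$. In that form the coefficients are not determined by $B'$, since integration by parts moves derivatives between $f$ and $g$. So invariance of $B'$ does not make the coefficient tensor invariant. Conversely, there are nonzero coefficient tensors that no jet action can kill. For $m=2$, the tensor $\epsilon^{ij}$ gives $\epsilon^{ij}\p_if\,\p_jg\,\mu_0=df\wedge dg$, which is natural under every orientation-preserving diffeomorphism. It drops out only because $\int_M df\wedge dg=0$ by Stokes, not because of any invariance. So \emph{killing every coefficient except the zeroth-order one} cannot be achieved in that representation. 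Moreover, the coefficients are a priori only distributions, so an isotropy argument at a point $x$ is meaningless before you know they are constant.

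\textbf{The missing idea.} Stay with the one-sided normal form your first sentence already gives:
$B'(f,g)=\sum_{|\al|\le k}\langle a_\al, f\cdot\p^\al g\rangle$,
whose distributional coefficients $a_\al$ are uniquely determined (H\"ormander, Thm.~2.3.5). This is what the paper does in Lemma \ref{old8}. Invariance can then be read off coefficientwise in two steps.
First, work in coordinates with $\mu_0=du^1\wedge\dots\wedge du^m$. Exact divergence-free fields equal to $\p_{u^k}$ near a point, together with uniqueness, give $\p_{u^k}a_\al=0$. Hence $a_\al=C_\al\,\mu_0$ with constants $C_\al$, which then patch globally.
Second, the cut-off linear fields $\bar X_B$ with $B\in\mf{sl}(m,\R)$ show that each homogeneous block $(C_\al)_{|\al|=r}\in\on{Sym}^r(V^*)$ is $\on{SL}(m,\R)$-invariant. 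Since $\on{Sym}^r(V^*)$ is irreducible and nontrivial for $r\ge1$ and $m\ge2$, this block vanishes.
That leaves $C\int_M fg\,\mu_0$ without any use of higher-order jets. Your separate worry about first-order terms $\int f\,Xg$ is just the case $r=1$.
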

 
\subsection{Acknowledgments} This paper was started 2016 after \cite{BBM2016} was finished, and a first step toward the main result was done in \cite{BMPR18}. Then the work on this paper hit a wall, until in 2026 discussions with Claude Sonnet 5 (free version) helped me to finish the proof of Lemma \ref{old8}. Thanks are due to  Boris Kruglikov, Valentin Lychagin, Martin Bauer, Philipp Harms for discussions and hints.  
 
\section{Manifolds with corners}
 
\subsection{Manifolds with corners alias quadrantic (orthantic) manifolds}
For more information we refer to \cite{DouadyHerault73}, \cite{Michor80}, \cite{Melrose96}, etc.
Let $Q=Q^m=\mathbb R^m_{\ge 0}$ be the positive orthant or quadrant. By Whitney's extension theorem or Seeley's theorem,
restriction $C^{\infty}(\mathbb R^m)\to C^{\infty}(Q)$ is a surjective continuous linear mapping which admits a continuous linear section (extension mapping); so $C^{\infty}(Q)$ is a direct summand in $C^{\infty}(\mathbb R^m)$. A point $x\in Q$ is called a \emph{corner of codimension} $q>0$ if $x$ lies in the intersection of $q$ distinct coordinate hyperplanes. Let $\p^q Q$ denote the set of all corners of codimension $q$.
 
A manifold with corners (recently also called a quadrantic manifold) $M$ 
is a smooth manifold modelled on open subsets of $Q^m$.
We assume that it is connected and second countable; then it is paracompact and for each open cover it admits a subordinated smooth partition of unity. Any manifold with corners $M$ is a submanifold with corners of an open manifold $\tilde M$ of the same dimension, and each smooth function on $M$ extends to a smooth function on $\tilde M$. Moreover,  restriction $C^\infty(\tilde M)\to C^\infty(M)$ is a surjective continuous linear map which admits a continuous linear section; this follows by  gluing via a smooth partition of unity  from the result about quadrants.Thus $C^\infty(M)$ is a topological direct summand in 
$C^\infty(\tilde M)$ and the same holds for the dual spaces: The space of distributions 
$\mathcal D'(M)$, which we identity with $C^\infty(M)'$ in this paper, is a direct summand in 
$\mathcal D'(\tilde M)$. It consists of all distributions with support in $M$.
 
We do not assume that $M$ is oriented, but eventually we will assume that $M$ is compact. 
Diffeomorphisms of $M$ map the boundary $\p M$ to itself and map  the boundary $\p^q M$ of corners of codimension $q$ to itself; $\p^q M$ is a submanifold of codimension $q$ in $M$; in general $\p^q M$ has finitely many connected components. We shall consider $\p M$ as stratified into the connected components of all $\p^q M$ for $q>0$. Note that $\p^0 M = M\setminus \p M$ is the open interior of $M$.
 
Each diffeomorphism of $M$ restricts to a diffeomorphism of $\p M$ and to a diffeomorphism of each $\p^q M$. The Lie 
algebra of $\Diff(M)$ consists of all vector fields $X$ on $M$ such that $X|\p^q M$ is tangent to 
$\p^q M$. We shall denote this Lie algebra by $\X(M,\p M)$.
 
\subsection{Differential forms}
There are several differential complexes on a manifold with corners.  
If $M$ is not compact there are also the versions with compact support. 
\begin{itemize}
\item Differential forms that vanish near $\p M$. If $M$ is compact, this is the same as
the differential complex $\Om_c(M\setminus \p M)$ of differential forms with compact support 
in the open interior $M\setminus \p M$. 
\item $\Om(M,\p M) = \{\al\in \Om(M): j_{\p^q M}^*\al =0 \text{ for all } q\ge 1\}$, the complex of differential forms that pull back to 0 on each boundary stratum. Note that $\Om^0(M,\p M) = \{f \in C^{\infty}(M): f|_{\p M} = 0\}$.
\item $\Om(M)$, the complex of all differential forms. Its cohomology equals 
singular cohomology with real coefficients of $M$, since $\mathbb R\to \Om^0\to \Om^1\to \dots$
is a fine resolution of the constant sheaf on $M$; for that one needs existence of smooth partitions of unity and the Poincar\'e lemma which holds on manifolds with corners.
The Poincar\'e lemma can be proved as in \cite[9.10]{Mic2008} in each quadrant.
\item $\Om(M,\p M)$ is a graded ideal in $\Om(M)$. 
\end{itemize}
If $M$ is an oriented manifold with corners of dimension $m$ and if $\mu\in \Om^m(M)$ is a nowhere vanishing form of top degree, then $\X(M)\ni X\mapsto i_X\mu\in \Om^{m-1}(M)$ is a linear isomorphism. 
Moreover, $X\in \X(M,\p M)$ (tangent to the boundary) if and only if $i_X\mu\in\Om^{m-1}(M,\p M)$.
 
Let us consider the short exact sequence of differential graded algebras
$$
0\to \Om(M,\p M) \to \Om(M) \to \Om(M)/\Om(M,\p M)\to 0\,.
$$
The complex $\Om(M)/\Om(M,\p M)$ is a subcomplex of the product  of $\Om(N)$ for all connected components $N$ of all 
$\p^q M$. The quotient consists of forms which extend continuously over boundaries to $\p M$ with its induced topology in such a way that one can extend them to smooth forms on $M$; this is contained in the space of `stratified forms' as used in \cite{Valette15}. There Stokes' formula is proved for stratified forms.
 
\begin{proposition}[Stokes' theorem] \label{Stokes}
For a connected oriented manifold $M$ with corners of dimension $\dim(M)=m$ and for any $\om\in\Om^{m-1}_c(M)$ we have
$$
\int_M d\om = \int_{\p^1M} j_{\p^1 M}^*\om\,.
$$
\end{proposition}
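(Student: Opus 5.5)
The plan is to reduce to the model case of the orthant $Q^m$ by a partition of unity, and then to prove the local statement by the fundamental theorem of calculus in each coordinate direction, in the spirit of the usual proof of Stokes' theorem for manifolds with boundary. Orient $\p^1 M$ as the boundary: at a point of $\p^1 M$ we use the outward normal first, and the corners of codimension $\ge 2$ form a set of measure zero in the closure of $\p^1 M$. The integral over $\p^1 M$ is therefore well defined. It converges absolutely because $\om$ has compact support and the closure of $\p^1 M$ is a compact stratified set, so $\p^1 M\cap \on{supp}\om$ has finite $(m-1)$-dimensional volume.

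\textbf{Step 1: localization.} Choose an oriented atlas of charts $u_a:U_a\to Q^m$ and a smooth partition of unity $(f_a)$ subordinate to it. Only finitely many $f_a$ are nonzero on $\on{supp}\om$. Both sides of the formula are linear in $\om$, and $d(f_a\om)$ is again an $m$-form with compact support. It therefore suffices to prove the formula for $\om$ with compact support in a single chart, that is, for $\om\in\Om^{m-1}_c(V)$ with $V$ open in $Q^m$. We may also extend $\om$ by zero to a compactly supported form on $Q^m$.

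\textbf{Step 2: the orthant computation.} Write
$$\om=\sum_i (-1)^{i-1} g_i\, dx^1\wedge\dots\wedge\widehat{dx^i}\wedge\dots\wedge dx^m.$$
Then $d\om=\sum_i \p_i g_i\, dx^1\wedge\dots\wedge dx^m$. By Fubini, the $i$-th term integrates to
$$\int_{Q^{m-1}} \int_0^\infty \p_i g_i\,dx^i\, d\hat x = -\int_{Q^{m-1}} g_i|_{x^i=0}\, d\hat x,$$
using compact support. The face $\{x^i=0\}\cap Q^m$ has outward normal $-\p_i$. On the open face (the part of $\p^1 Q$ where only $x^i$ vanishes) only the $i$-th summand pulls back nontrivially. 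With the boundary orientation (outward normal first) its pullback integrates exactly to the term above, including the sign $(-1)^{i-1}$.

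The only point needing care is that the full hyperplane piece $\{x^i=0\}\cap Q^m$ differs from its part in $\p^1 Q$ by the points where some further $x^j$ vanishes. These lie in the union of lower-dimensional faces, a null set for $(m-1)$-dimensional Lebesgue measure. Summing over $i$ then gives $\int_{Q}d\om=\int_{\p^1 Q}j^*\om$.

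\textbf{Step 3: globalization and the main obstacle.} Summing over the partition of unity gives the result. The main thing to check is that the chart-wise boundary integrals are compatible, meaning the boundary orientation is chart-independent. This holds because a diffeomorphism between open subsets of $Q^m$ preserves each $\p^q$, maps inward-pointing vectors to inward-pointing vectors, and so preserves the induced orientation on $\p^1$. The second point is that the lower strata contribute nothing, which is the measure-zero argument from Step 2.

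Alternatively, one could cite \cite{Valette15}, since $\om$ restricted to the strata is a stratified form. The direct computation, however, avoids any regularity issue at the corners: the integrands $g_i|_{x^i=0}$ are smooth on the closed face, so Fubini applies without any improper integrals.
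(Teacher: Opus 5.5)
Your proof is correct. The partition-of-unity reduction to a compactly supported form on $Q^m$, Fubini in each coordinate direction, the sign bookkeeping, and the disposal of $\p^{\ge 2}$ as a null set in each face all check out. On signs: the face $\{x^i=0\}$ with its boundary orientation has sign $(-1)^i$ relative to the coordinates $\hat x$, which against the $(-1)^{i-1}$ in $\om$ gives $-\int g_i|_{x^i=0}\,d\hat x$. The only loose remark is that $\overline{\p^1M}$ need not be compact for non-compact $M$, but integrability follows chart by chart after your partition of unity anyway.

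The paper gives no argument of its own here; it only points to the short proof in \cite{BMPR18} and to Valette's Stokes formula for stratified forms \cite{Valette15}. Your direct local computation is the standard self-contained proof of the statement.
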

 
See \cite{BMPR18} for a short proof.
 
\subsection{Top cohomology of the pair $(M,\p M)$}
For a connected oriented manifold with corners $M$ of dimension $m$ (we assume that $\p M$ is not empty) we consider the following diagram; see \cite[section 8]{BMPR18} for the simple proofs. 
Here $\om\in\Om^m_c(M\setminus \p M)$ is a fixed form with $\int\om = 1$.
All instances of $\mathbb R$ in the diagram are connected by identities which fit commutingly into the diagram.
Each line is the definition of  the corresponding top de~Rham cohomology space. 
The integral in the first line induces an isomorphism in cohomology since 
$M\setminus \p M$ is a connected oriented open manifold.
The bottom triangle commutes by Stokes' theorem \ref{Stokes}.
$$
\xymatrix{
\Om^{m-1}_c(M\setminus \p M) \ar[r]^{d} \ar@{^{(}->}[d] & 
\Om^m_c(M\setminus \p M) \ar@/^1.5pc/[rr]^{\int_{M\setminus \p M}} \ar@{->>}[r]  \ar@{^{(}->}[d] &
H^m_c(M\setminus \p M) \ar@{=}[r]  \ar[d]& \mathbb R
\\
\Om^{m-1}_c(M, \p M) \ar[r]^{d} \ar@{^{(}->}[d] & 
\Om^m_c(M,\p M) \ar@{->>}[r] \ar@{=}[d] \ar@/_1.5pc/[rr]_{\qquad\qquad\qquad\qquad\quad\int_M} &
H^m_c(M,\p M) \ar@{=}[r] \ar[d]&  \mathbb R
\\
\Om^{m-1}_c(M) \ar[r]^{d} \ar[dr]_{\int_{\p^1 M}\o j_{\p^1 M}^*} & 
\Om^m_c(M)  \ar@{->>}[r] \ar[d]^{\int_M}  &
H^m_c(M) \ar@{=}[r]& 0
\\
& \mathbb R  
}
$$
 
\begin{theorem} {\rm (\cite{BMPR18} Moser's theorem for manifolds with corners)}\label{Moser}
Let $M$ be a compact smooth manifold with corners, possibly non-orientable. 
Let $\mu_0$ and $\mu_1$ be two smooth positive densities in 
$\Dens_+(M)$ with $\int_M\mu_0 = \int_M\mu_1$.
Then there exists a diffeomorphism $\ph:M\to M$ such that 
$\mu_1= \ph^*\mu_0$. If  and only if $\mu_0(x)=\mu_1(x)$ for each corner $x\in\p^{\ge 2}M$ 
of codimension $\ge 2$, then $\ph$ can be chosen to be the identity on $\p M$. 
\end{theorem}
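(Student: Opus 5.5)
We prove Moser's theorem for manifolds with corners by the Moser homotopy method, using the cohomology diagram above to solve the homological equation with the right boundary behaviour.

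\emph{Reduction to the orientable case.} If $M$ is not orientable, pass to the orientation double cover $\tilde M\to M$ with deck involution $\sigma$, and pull back $\mu_0,\mu_1$. If the vector field constructed below is built $\sigma$-invariantly, it descends to $M$. This can be arranged by averaging the primitive under $\sigma$: $\sigma$ reverses orientation but preserves densities, so averaging is compatible with the identification of densities with top forms twisted by orientation. So from now on $M$ is oriented, and $\mu_i$ are positive $m$-forms with equal integrals.

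\emph{Homotopy method.} Put $\mu_t=(1-t)\mu_0+t\mu_1$. This path stays positive, and $\int_M\mu_t$ is constant. We look for a time-dependent vector field $X_t\in\X(M,\p M)$ with $L_{X_t}\mu_t+\dot\mu_t=0$, i.e.
$$
d(i_{X_t}\mu_t)=\mu_0-\mu_1 .
$$
Since $\int_M(\mu_0-\mu_1)=0$, the bottom row of the diagram gives $\mu_0-\mu_1=d\al$ for some $\al\in\Om^{m-1}(M)$. We need to correct $\al$ so that it lies in $\Om^{m-1}(M,\p M)$. Then $X_t$, defined by $i_{X_t}\mu_t=\al$, is tangent to all strata. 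Since $M$ is compact, its flow exists for $t\in[0,1]$ and preserves each $\p^qM$. The resulting time-one map $\ph$ satisfies $\ph^*\mu_1=\mu_0$; inverting the roles of $\mu_0$ and $\mu_1$ gives the statement as written.

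\emph{The relative primitive, first statement.} The main obstacle is producing a primitive in $\Om^{m-1}(M,\p M)$. The middle row of the diagram gives $H^m(M,\p M)=\mathbb R$ via $\int_M$. However, $\mu_0-\mu_1$ need not lie in $\Om^m(M,\p M)$, because it need not vanish on corners of codimension $\ge2$. A top form pulls back to zero on every stratum of positive codimension automatically, by degree. So in fact $\Om^m(M,\p M)=\Om^m(M)$, and the middle row already yields $\al\in\Om^{m-1}(M,\p M)$ with $d\al=\mu_0-\mu_1$. This proves the first statement; $\ph$ then preserves $\p M$ stratumwise but moves points along the boundary.

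\emph{Fixing the boundary.} For the ``if and only if'' part, necessity is immediate. At a corner $x$ of codimension $\ge2$, any diffeomorphism that is the identity on $\p M$ has $T_x\ph=\mathrm{Id}$, since the boundary hyperplanes through $x$ span $T_xM$. Hence $\mu_1(x)=(\ph^*\mu_0)(x)=\mu_0(x)$. For sufficiency we need $\al$ with $\al|_{\p M}=0$ pointwise, as a section of $\La^{m-1}T^*M$, so that $X_t$ vanishes on $\p M$.

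I would proceed in two steps. First, near $\p^1M$, straighten $\mu_0$ and $\mu_1$ in collar-type coordinates $(x_1,\dots)$ with $x_1\ge0$, and use the primitive
$$
\int_0^{x_1}(\mu_0-\mu_1)
$$
along the normal direction. This form vanishes on $\{x_1=0\}$. At a codimension $q$ corner we must do the same simultaneously in all $q$ normal directions. This is exactly where $\mu_0=\mu_1$ on $\p^{\ge2}M$ enters: it makes the local primitives from adjacent faces compatible, so they can be patched with a partition of unity without creating a nonvanishing pullback at the corner. Second, the remaining error has compact support in $M\setminus\p M$ and zero integral, so the top row of the diagram gives it a primitive in $\Om^{m-1}_c(M\setminus\p M)$.

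I expect this local corner construction, and the verification that the patched form vanishes on $\p M$, to be the delicate part.
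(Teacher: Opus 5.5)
The paper gives no proof of this theorem; it quotes it from \cite{BMPR18}. Several parts of your proposal are correct as they stand: the reduction to the oriented case (using the anti-invariant average $\tfrac12(\al-\sigma^*\al)$, since $\sigma^*$ negates the top forms representing $\pi^*\mu_i$), the first statement via $\Om^m(M,\p M)=\Om^m(M)$ and the middle row of the diagram, and the necessity of $\mu_0=\mu_1$ on $\p^{\ge2}M$.

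The gap is in the sufficiency step, where you say the local primitives ``can be patched with a partition of unity'' and that ``the remaining error has compact support in $M\setminus\p M$''. Suppose $\om:=\mu_0-\mu_1=d\al_a$ on $U_a$ with $\al_a=0$ pointwise on $\p M\cap U_a$, and $\sum_a\rho_a=1$ near $\p M$. Then near $\p M$
\[
d\Big(\sum_a\rho_a\al_a\Big)=\om+\sum_a d\rho_a\wedge\al_a .
\]
The error term vanishes on $\p M$, but in general on no neighbourhood of $\p M$. The reason is that chart-dependent primitives such as $\int_0^{x_1}\om$ do not agree on overlaps. Already on $[0,1]\times S^1$, two collar charts related by the shear $y'=y+x_1$ produce an error $\approx\rho_a'(y)\,x_1\,g(0,y)\,dx_1\wedge dy$. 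So the leftover is not in $\Om^m_c(M\setminus\p M)$, and the top row of the diagram cannot be applied.

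You identify as delicate the verification that the patched form vanishes on $\p M$. That part is automatic once each $\al_a$ vanishes there. The actual difficulty, $d\al=\om$ on a whole neighbourhood of $\p M$, is not addressed. There is also a confusion at the corners: what you need is pointwise vanishing on every face. Vanishing pullbacks only give tangency, which you already had from the first statement. Moreover, a one-direction primitive $\int_0^{x_1}\om$ does not vanish on the faces $\{x_j=0\}$ for $j\ne1$.

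Two ingredients are missing.

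\emph{The local primitive at a corner.} At a codimension-$q$ corner, use coordinates $x_1,\dots,x_q\ge0$ and tangential $y$, and write $dy:=dy_1\wedge\dots\wedge dy_{m-q}$ and $\om=g\,dx_1\wedge\dots\wedge dx_q\wedge dy$. The hypothesis means $g=0$ on all $\{x_i=x_j=0\}$, so by Hadamard $g=\sum_i\big(\prod_{j\ne i}x_j\big)g_i$. Then
\[
\al_a=\sum_{i=1}^q(-1)^{i-1}\Big(\prod_{j\ne i}x_j\Big)\Big(\int_0^{x_i}g_i\Big)\,dx_1\wedge\dots\widehat{dx_i}\dots\wedge dx_q\wedge dy ,
\]
where $\int_0^{x_i}$ integrates $g_i$ in the $i$-th variable. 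This form vanishes pointwise on every face and satisfies $d\al_a=\om$. The hypothesis is forced: a form vanishing pointwise on two transversal faces has all first derivatives, hence its $d$, equal to zero on their intersection.

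\emph{A gluing that keeps $d\al=\om$ near $\p M$.} One way is to treat finitely many charts one at a time, from the deepest corners outward. At each step replace $\om$ by $\om-d(\chi_a\al_a)$, using product cut-offs $\chi_a=\chi(y)\rho(x_1)\cdots\rho(x_q)$. Every term of $\al_a$ contains the full $dy$, so $d\chi_a\wedge\al_a=0$ wherever all $x_j$ are small. The new error therefore lies in the interior, or near strata of lower codimension that are still to be treated. It also vanishes pointwise on $\p M$, so the hypothesis persists for later steps.

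Only after this is the leftover supported in $M\setminus\p M$, with integral $0$ by Stokes. Your second step then applies, and the Moser flow with $X_t|_{\p M}=0$ finishes the proof. For a manifold with boundary but no corners, a single global collar and its radial homotopy operator would suffice.
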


\section{Diffeomorphism invariant tensor fields on the space of densities}
 
\begin{maintheorem}\label{maintheorem}
Let $M$ be an oriented compact connected manifold with corners, of dimension $m\ge2$, and let 
$$\p^pM=(\p^pM)_1 \sqcup (\p^pM)_2\sqcup \dots\sqcup (\p^pM)_{n_p}$$
be the decomposition of the set of corners of codimension $p$ into its connected components which are manifolds of dimension $m-p$. 
Then the the associative algebra of bounded $\Diff_0(M)$-invariant tensor fields on $\on{Dens}_+(M)$  has the following set of generators, where $\mu\in \on{Dens}_+(M)$ is the footpoint and $\al_i\in \Ga(\on{Vol}(M)) = T_\mu\on{Dens}_+(M)$: 
\begin{align*}
&f(\mu(M))\quad\text{ where }f\in C^\infty(\mathbb R_{>0},\mathbb R),
\\&
\int_M \frac{\al_1}{\mu}\dots\frac{\al_n}{\mu}\,\mu \quad n\ge1,
\\&
\int_M \prod_{i\in S_0} \frac{\al_i}{\mu}\cdot J_{S_1}\cdots J_{S_k}\cdot \mu\,,\text{ where } \{1,\dots,n\}= S_0 \sqcup S_1\sqcup \dots\sqcup S_k\, \text{ is a  partition }
\\&\qquad
\text{ with } |S_j| = m \text{ for all } j\ge 1 \text{ and where }
J_{\{i_1,\dots,i_m\}} = \frac{ d(\frac{\al_{i_1}}{\mu})\wedge \dots \wedge  d(\frac{\al_{i_m}}{\mu})}{\mu}\;,
\\&
\int_{(\p^p M)_j}\ \frac{\al_{i_1}}{\mu}\dots  \frac{\al_{i_{n-m+p}}}{\mu}\cdot d\Big(\frac{\al_{i_{n-m+p+1}}}{\mu}\Big)\wedge \dots\wedge  d\Big(\frac{\al_{i_{n}}}{\mu}\Big),
\\&\qquad
\text{ where } p=1,\dots m-1 \text{ and }  j=0,\dots, n_p \;,
\\&   
\frac{\al}{\mu}((\p^mM)_j),  \quad\text{ for  }j=1,\dots,n_m;\text{ note that }(\p^mM)_j \text{ is a point.}
\end{align*}  
For a non-orientable compact manifold $\overline M$ with corners, let $\pi:M \to \overline M$ be its orientable double cover with its deck transformation $\ta:M\to M$. We consider the bounded linear isomorphism
$$\tfrac12 \pi^*:\Dens_+(\overline M) \to \{\al\in \Dens_+(M): \ta^*\al=\al\} \subset\Dens_+(M).$$
Then the set of generators for the algebra of bounded $\Diff(M)$-invariant tensor fields on $\Dens_+(M)$, applied to $\tfrac12\pi^* \al_i$ and $\tfrac12\pi^*\mu$ for $\mu\in \Dens_+(\overline M)$ and $\al_i\in T_\mu\Dens_+(\overline M)$, is a set of generators for the algebra of $\Diff(\overline M)$-invariant bounded tensor fields on $\Dens_+(\overline M)$.
\end{maintheorem}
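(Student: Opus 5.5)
We follow the strategy of \cite{BBM2016}, adapted to corners. First we reduce to a fixed footpoint. Let $G$ be a bounded $\Diff_0(M)$-invariant $n$-linear tensor field on $\Dens_+(M)$. By Moser's theorem \ref{Moser}, any two densities with equal total volume are related by a diffeomorphism; on a connected oriented $M$ this diffeomorphism can be taken in $\Diff_0(M)$, after composing with a volume preserving diffeomorphism isotopic to it if necessary. Consequently $G$ is determined by its values at the multiples $c\,\mu_0$ of one fixed density $\mu_0$. Scaling $c$ produces the coefficient functions $f(\mu(M))$. We write $\al_i=h_i\mu$ with $h_i\in C^\infty(M)$. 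Then $G_\mu$ becomes a bounded $n$-linear form on $C^\infty(M)$, invariant under the stabilizer $\Diff_0(M)_\mu$ of volume preserving diffeomorphisms. By the Schwartz kernel theorem in the direct-summand form described above, its kernel is a distribution $K\in\mathcal D'(M^n)$ with support in $M^n$. Infinitesimally, $K$ is annihilated by $\sum_i \L_{X}^{(i)}$ for every $X\in\X(M,\p M)$ with $\operatorname{div}_\mu X=0$. Such $X$ are exactly those with $i_X\mu$ closed in $\Om^{m-1}(M,\p M)$.

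The second step is to localize the support of $K$. We take $X$ supported near a point of $M^n$ off the diagonal, acting on only one factor, and use that the divergence-free fields tangent to the strata act transitively on the interior and on each stratum $(\p^pM)_j$. This shows that $K$ is supported on the full diagonal $\De\subset M^n$. Near the diagonal, $K$ is a finite-order distribution, so locally
\[
K(h_1,\dots,h_n)=\sum \int \text{(coefficients)}\cdot \p^{\be_1}h_1\cdots\p^{\be_n}h_n,
\]
integrated against $\mu$ on the interior. The same expression appears on each stratum $\p^pM$ with a distribution supported there, including transversal derivatives.

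Third, we determine the local expressions from invariance under divergence-free fields. We use the ones generated by $i_X\mu=d\ps$, with $\ps\in\Om^{m-2}(M,\p M)$ compactly supported near a point. These fields realize all volume preserving jets, and also the jets of such fields that preserve a stratum. The resulting polynomial invariants of the unimodular jet group acting on jets of functions are classical. On the interior only products of the $h_i$ and products of $m$-fold Jacobians $dh_{i_1}\wedge\dots\wedge dh_{i_m}/\mu$ survive; this gives the third family of generators. On a stratum of codimension $p$, the relevant group is the group of volume preserving jets tangent to the stratum. It kills transversal derivatives and all higher derivatives. It leaves only forms of top degree $m-p$ built from the restricted $dh_i$. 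No density on the stratum is canonically induced, so these forms must be integrated directly, which is the fourth family. At corner points only the values $h(x)$ remain, which is the last family.

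I expect this third step to be the main obstacle, specifically showing that higher jet terms cancel. In the interior one needs that unimodular invariance forces each $\p^{\be}h_i$ to enter only through $dh_i$. At the boundary one must also rule out normal-derivative terms, using fields whose normal component vanishes to high order but whose tangential part has arbitrary divergence compensated in the interior. I would first verify this for multilinear forms in which every $h_i$ has order at most one, and then run an induction on the order. Throughout, Stokes \ref{Stokes} and the exactness in the top-cohomology diagram are used to shift boundary contributions between strata.

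Finally, the non-orientable case follows by pulling back. $\tfrac12\pi^*$ identifies $\Dens_+(\overline M)$ with the $\ta$-invariant densities on the double cover $M$. Every diffeomorphism of $\overline M$ lifts to a $\ta$-equivariant diffeomorphism of $M$. Averaging over $\ta$ and restricting to $\ta$-invariant arguments then turns the generators for $M$ into generators for $\overline M$.
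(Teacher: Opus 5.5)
There is a genuine gap in your second step. The kernel is invariant only under the \emph{diagonal} action, $\L_{X^{\text{diag}}}K=0$ with $X^{\text{diag}}=X\times 0\times\dots\times 0+\dots+0\times\dots\times 0\times X$. A field acting on one factor alone is not a symmetry, and the conclusion you draw is false for $n\ge2$. For instance, $\int_M\al_1\cdot\int_M\al_2$ (the $C_2$-term of \cite{BBM2016}) is invariant, and its kernel $\mu_0\otimes\mu_0$ has support all of $M^2$. Likewise, for two distinct points $x\ne y$ of $\p^mM$, the product $\frac{\al_1}{\mu}(x)\,\frac{\al_2}{\mu}(y)$ has its kernel supported at an off-diagonal point. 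Transitivity only gives Lemma \ref{old4}: a one-variable invariant is $C\mu_0$ plus corner deltas. What your proposal lacks is the paper's mechanism for the off-diagonal part. Lemma \ref{old5} uses $\L_{X^{\text{diag}}}\check G_c(f_1)=\check G_c(\L_Xf_1)$ together with a cut-off of $X$ near $\on{supp}(f_1)$ to show that $\check G_c(f_1)$ is invariant in the remaining variables away from $\on{supp}(f_1)$. Iterating this gives Lemma \ref{old6}: away from the partial diagonals, the kernel is built from tensor products of the one-variable invariants of Lemma \ref{old4}, and is just $C\mu_0^{\otimes n}$ if $\p^mM=\emptyset$. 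The induction on $n$ in \ref{old9} then subtracts products $C_\be(f_1,\dots,f_k)\,D_\be(f_{k+1},\dots,f_n)$ of lower-degree invariants until only a kernel on the full diagonal remains for Lemma \ref{old8}. This induction is exactly where the products in ``the algebra generated by'' come from. As written, your argument would show that every invariant is a single integral, which already fails for $n=2$.

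Your third step would also fail, and the obstruction is not merely technical: the cancellation of higher jets that you plan to prove by induction on the order does not hold. Since $J(h_1,\dots,h_m)=dh_1\wedge\dots\wedge dh_m/\mu_0$ is natural under $\Diff(M,\mu_0)$, so are iterated Jacobians. Take $m=2$ and $\{a,b\}:=da\wedge db/\mu_0$. The $5$-linear form $T(h_1,\dots,h_5)=\int_M h_5\,\{\{h_1,h_2\},\{h_3,h_4\}\}\,\mu_0$ is invariant; replacing $\mu_0$ by $\mu$ and $h_i$ by $\al_i/\mu$ gives a $\Diff_0(M)$-invariant tensor field. In Darboux coordinates its full-diagonal representation contains $\p_x^2h_1\,\p_yh_2\,\p_xh_3\,\p_y^2h_4\cdot h_5$ with coefficient $1$.

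Now test on $h_i=\psi_i((\cdot-x_0)/\varepsilon)$ supported in a small interior disc. Then $T$ scales like $\varepsilon^{-4}$. By contrast, every degree-$5$ element of the algebra generated by the listed fields is $O(\varepsilon^{-2})$: boundary and corner factors vanish, an interior factor with $k$ Jacobian blocks scales like $\varepsilon^{2-2k}$, and five slots hold at most two blocks. So no version of your Step 3 can produce exactly the stated list.

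The paper's own Step 3 in Lemma \ref{old8} breaks at the same point. The First Fundamental Theorem allows a slot with $r_j\ge2$ whose polarized copies lie in \emph{different} determinant blocks, e.g.\ $\det(\xi_1,\xi_2)\det(\xi_1,\xi_3)\in\on{Sym}^2V^*\otimes V^*\otimes V^*$. Hence ``$r_j=1$'' does not follow, and iterated Jacobians such as $T$ are missing from the classification.
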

 
 Note that the generators in the theorem can be expressed directly on non-orientable $M$ by using the mapping $\om\mapsto |\om|$ from $m$-forms to densities, as in:
 $$
 \Big| d\Big(\frac{\al_{i_1}}{\mu}\Big)\wedge \dots\wedge  d\Big(\frac{\al_{i_{m}}}{\mu}\Big)\Big|
 $$

\subsection{Beginning of the proof of the Main Theorem.}
Since the step from orientable $M$ to non-orientable is already done in the Main Theorem, it suffices to prove the case of orientable $M$.
Let us fix a basic probability density (a  volume form) $\mu_0$. By 
Moser's theorem \ref{Moser} for manifolds with corners, for each $\mu\in\Dens_+(M)$ there exists a 
diffeomorphism $\ph_\mu\in \Diff(M)$ with $\ph_\mu^*\mu=\mu(M)\mu_0=:c.\mu_0$ where 
$c=\mu(M)=\int_M\mu>0$.
Then 
$$
\big((\ph_\mu^*)^*G\big)_\mu(\al_1,\dots,\al_n) = G_{\ph_\mu^*\mu}(\ph_\mu^*\al_1,\dots,\ph_\mu^*\al_n) =  
G_{c.\mu_0}(\ph_\mu^*\al_1,\dots,\ph_\mu^*\al_n)\,.
$$
Thus it suffices to show that for any $c>0$ we have
\[
G _{c\mu_0}(\al_1,\dots,\al_n)= C_0(c).\int_M\frac{\al_1}{\mu_0}\dots\frac{\al_n}{\mu_0}\mu_0 + \dots
\]
for some functions $C_0,\dots$ of the total volume $c = \mu(M)$. 
Since $c\mapsto c.\mu_0$ is a smooth curve in $\Dens_+(M)$,
the functions $C_0,\dots$ are then smooth in $c$. 
Both $k$-linear forms are still invariant under the action of the group
$\Diff(M,c\mu_0)=\Diff(M,\mu_0)=\{\ps\in \Diff(M): \ps^*\mu_0=\mu_0\}$.
The $n$-linear form 
$$
\big(T_{\mu_0}\Dens_+(M)\big)^k\ni(\al_1,\dots,\al_n)\mapsto 
G_{c\mu_0}\Big(\frac{\al_1}{\mu_0}\mu_0,\dots,\frac{\al_n}{\mu_0}\mu_0\Big)
$$
can be viewed as a bounded $n$-linear form  
\begin{gather*}
C^\infty(M)^k\ni (f_1,\dots,f_n)\mapsto G_c(f_1,\dots,f_n)\,.
\end{gather*}
Using the Schwartz kernel theorem \cite[Theorem 5.2.1]{Hor1983}, $G_c$ has a kernel $\hat G_c$, which is a distribution (generalized function) in 
\begin{align*}
\mathcal D'(M^n) \cong \mathcal D'(M)\,\bar\otimes\dots\bar\otimes\, \mathcal D'(M) &= 
\big(C^\infty(M)\,\bar\otimes\dots\bar\otimes\, C^\infty(M)\big)' 
\\&
\cong L(C^\infty(M^k), \mathcal D'(M^{n-k}))\,.
\end{align*}
with defining relations
\begin{align*}
G_c(f_1,\dots,f_n) &= \langle \check G_c(f_1,\dots,f_k),f_{k+1}\otimes\dots\otimes f_n \rangle 
= \langle \hat G_c, f_1\otimes\dots\otimes f_n \rangle\,,
\end{align*}
and permuted versions thereof. 
Moreover, $\hat G_c$ is invariant under the diagonal action of $\Diff(M,\mu_0)$ on $M^n$. In view 
of the tensor product in the defining relations, the 
infinitesimal version of this invariance is:  
\begin{align*}
0&=\langle\L_{X^{\text{diag}}}\hat G_c,f_1\otimes\dots\otimes f_n\rangle
=-\langle\hat G_c,\L_{X^{\text{diag}}}(f_1\otimes\dots\otimes f_n)\rangle 
\\&
= - \sum_{i=1}^n\langle\hat G_c,f_1\otimes\dots\otimes\L_X f_i\otimes\dots\otimes f_n)\rangle 
\\
X^{\text{diag}} &= X\x 0\x \dots\x 0 + 0\x X\x 0\x \dots \x 0 + \dots.
\end{align*}
for all $X\in\X(M,\mu_0)$.
 
We will consider various (permuted versions) of the associated bounded mappings
$$
\check G_c:C^\infty(M)^k \to \big(C^\infty(M)^{n-k}\big)' = \mathcal D'(M^{n-k})\,.
$$
We shall use the fixed density $\mu_0\in\on{Dens}_+(M)$ for the rest of this section. So we identify distributions on $M^k$ with the dual space $C^\infty(M^k)'=:\mathcal D'(M^k)$
 
\subsection{Good subsets of $M$}\label{good}
Consider an open oriented connected subset $U\subset M$ which is diffeomorphic to an open $m$-ball intersected with a quadrant, such that $U\cap (\p^q M)_j$  is
connected for each connected component of a boundary stratum. We shall call such a subset $U$ a \emph{good subset}.
Each point in $M$ admits a good open neighborhood.
On a good subset $U$ each density $\mu$ is an $m$-form where $m=\dim(M)$. 
Let $j_{\p^q M} = j:\p^q M\to M$ be the embedding of the boundary stratum of codimension $q$. Let $\p^q U= U\cap\p^q M$. 
Then $(j|\p U)^*\mu = 0$ since 
$\dim(\p^q M)=m-q<m$. Denote by $\Om^k(U,\p U)$ the space of all $k$-forms $\om\in \Om^k(U)$ such 
that $(j|\p^q U)^*\om=0$ for all $1\le q\le m$, and by $\Om^k_c(U,\p U)$ those with compact support. 
The mapping $\hat\io_{\mu_0}: \X(U)\to \Om^{m-1}(U)$ given by $X\mapsto i_X\mu_0$ is an isomorphism. 
Moreover, $\hat\io_{\mu_0}(X) \in  \Om^{m-1}(U,\p U)$ if and only if  $X|\p^q U$ is tangent to 
$\p^q U$ for all $q>0$; i.e., if $X\in \X(U,\p U)$. 
 
For a good subset $U$ with non-empty boundary we have $H^m_c(U)=0$, and $H^m_c(U,\p U)=\mathbb R$ induced by integration; see \cite[section 8]{BMPR18}. Moreover we also have $H^{m-1}(U,\p U) = 0$ since the good subset  $U$ is contractible in a way which contracts each boundary stratum of $\p U$ in itself. 
 
\subsection{The Lie algebra of $\Diff(M,\mu_0)$}\label{old1}
For a fixed positive density $\mu_0$ on $M$,
the Lie algebra of $\Diff(M,\mu_0)$ which we will denote by 
$\X(M,\p M,\mu_0)$, is the subalgebra of vector fields which are tangent to each boundary stratum and which are divergence free: $0=\on{div}^{\mu_0}(X) := \frac{\L_X\mu_0}{\mu_0}$.
These are exactly the fields $X$ such that for each good subset $U$ (where each density can be identified with an $m$-form) the form 
$\hat\io_{\mu_0}(X)$ is a closed form in $\Om^{m-1}(U,\p U)$, equivalently  
$0=\on{div}^{\mu_0}(X) := \frac{\L_X\mu_0}{\mu_0}$.
 
\subsection{The set of vector fields $\X_{\text{exact}}(M,\p M,\mu_0)$}\label{old3}
Denote by $\X_{\text{exact}}(M,\p M,\mu_0)$ the set (not a vector space) of `exact' divergence free 
vector fields $X = \hat\io_{\mu_0}\i(d\om)$, where $\om\in\Om^{m-2}_c(U,\p U)$ for a good
subset $U\subset M$. They are automatically tangent to each boundary stratum since $d\om\in \Om^{m-1}_c(U,\p U)$.                                                                                                                                                                                                                                                                          
 
For $x\in \p^p U$ and $g\in C^{\infty}_c(U)$, $g=1$ near $x,$ 
using coordinates $u^1\ge0,\dots,u^p\ge 0, u^{p+1},\dots,u^m$ near $x$ on $U$ with 
$\mu_0=du^1\wedge \dots\wedge du^m$,  for $k<\ell$ and $p<\ell$, the vector field 
\begin{equation*}
X:= \hat\io_{\mu_0}\i d(g.u^k.du^1\wedge  \dots \widehat{du^k}\dots 
\widehat{du^\ell}\dots\wedge du^m)\in \X_{\text{exact}}(M,\p M,\mu_0)
\end{equation*}
satisfies $X = \pm\p_{u^\ell}$ near $x$, so we can produce any frame in $T_x\p^p M$. 
  
Now let $A=(A^i_j)\in \mathfrak{sl}(m,\mathbb R)$ be a constant matrix with $\on{Tr}(A^i_j)= \sum_i A^i_i = 0$. The vector field
$\bar X_A = \sum_{i,j}A^i_j u^j \p_{u^i}$ is tangent to $\p^pU$ if $A_1:=(A^i_j)_{i,j\le p}$ is diagonal and $A^i_j=0$ for $i\le p<j$, i.e.\ if
$A= \begin{pmatrix}
A_1 & 0 \\ A_3 & A_2
\end{pmatrix}$, $A_1=\on{diag}(A^1_1,\dots,A^p_p)$, $\on{Tr}(A_1)=-\on{Tr}(A_2)$
(the lower-left block $A_3=(A^i_j)_{i>p,j\le p}$ is unconstrained by tangency, but plays no role below). In that case $\bar X_A$
satisfies
\begin{align*}
d\,i_{\bar X_A} \mu_0 &= d\big((-1)^{i-1} \sum_{i,j} A^i_j u^j.du^1\wedge \dots \widehat{du^i}\dots \wedge du^m\big) 
\\&
= (-1)^{i-1} \sum_{i,j} A^i_j. \de^j_i du^1\wedge\dots \wedge du^m =0 \,.
\end{align*}
Since $U$ is a good subset, $H^{m-1}(U,\p U)=0$ as noted in  \ref{good}, thus 
$i_{\bar X_A} \mu_0 = d\al$ for some $\al\in \Om^{m-2}(U,\p U)$. 
But then $X= \hat \io^{-1} d (g.\al) \in \X_{\text{exact}}(M,\p M,\mu_0)$ and $X=\bar X$ near $x$.
 
But note that $u^i\p_{u^i}$ near $x$ is not in $\X(U,\p U,\mu_0)$: we have 
$\L_{u^i\p_{u^i}}\mu_0= \mu_0 $.

\begin{lemma} \label{old2}
If for $f\in C^\infty(M)$ and a good set $U\subseteq M$ we have 
$(\L_Xf)|U=0$ for all 
$X\in\X_{\text{exact}}(M,\p M,\mu_0)$, then $f|U$ is constant.
\end{lemma}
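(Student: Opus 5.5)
The plan is to show that $df$ vanishes at every point of $U$; since $U$ is connected, this gives that $f|U$ is constant. Because $f$ is smooth up to the boundary, it will even suffice to show $df_x=0$ for $x$ in the dense open interior $U\setminus\p U$ and conclude by continuity. To keep the argument uniform, however, I would treat an arbitrary $x\in\p^pU$ with $0\le p\le m$, using the adapted coordinates $u^1\ge0,\dots,u^p\ge0,u^{p+1},\dots,u^m$ of \ref{old3} centred at $x$, in which $\mu_0=du^1\wedge\dots\wedge du^m$.

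\emph{Interior points, $p=0$.} Here the construction in \ref{old3} gives, for every $\ell$, a field $X\in\X_{\text{exact}}(M,\p M,\mu_0)$ with $X=\pm\p_{u^\ell}$ near $x$. Take $g\in C^\infty_c(U)$ with $g=1$ near $x$, and use the form $g\,u^k\,du^1\wedge\dots\widehat{du^k}\dots\widehat{du^\ell}\dots\wedge du^m$ with any $k\ne\ell$; this is possible because $m\ge2$. The hypothesis then gives
\[
0=(\L_Xf)(x)=\pm\,\p_{u^\ell}f(x)\qquad\text{for all }\ell,
\]
so $df_x=0$. The same argument works at a boundary point $x\in\p^pU$ for the tangential directions $\p_{u^\ell}$ with $\ell>p$ (taking $k<\ell$, $p<\ell$ as in \ref{old3}). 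Hence $df_x$ vanishes on $T_x\p^pM$. This already suffices for the whole lemma, since $df=0$ on the dense interior of $U$ forces $df=0$ on all of $U$ by continuity, and connectedness of $U$ then makes $f|U$ constant.

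\emph{Transversal directions (optional but self-contained).} If one wants the statement pointwise on strata, one can also use the fields $\bar X_A$ of \ref{old3}, which lie in $\X_{\text{exact}}$ near $x$. A pure diagonal block $A_1$ cannot be used, because $\bar X_A$ vanishes at $x$. Instead, at points near $x$ in the interior, the fields $\p_{u^\ell}$ for $\ell\le p$ are available through the interior construction.

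The main obstacle is to check that the fields produced in \ref{old3} genuinely belong to $\X_{\text{exact}}(M,\p M,\mu_0)$. This requires that the chosen $(m-2)$-form be compactly supported in $U$ and pull back to zero on each boundary stratum. Near interior points this is automatic, since the support of $g$ can be chosen to avoid $\p M$. At boundary points it is the verification already done in \ref{old3}, which is why the density and continuity argument is the cleanest route. The remaining point is that $(\L_Xf)(y)=df_y(X(y))$ with $X(y)$ running over a basis of $T_yM$ for every interior $y$; combined with the vanishing of $\L_Xf$ on $U$, this gives $df=0$ on $U$.
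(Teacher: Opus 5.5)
Your proof is correct and is essentially the paper's argument. At each interior point the exact fields of \ref{old3} give a basis $\pm\p_{u^\ell}$ of $T_xM$, so $df=0$ on $U\setminus\p U$, and continuity together with connectedness of $U$ then makes $f|U$ constant; your extra discussion of tangential and transversal directions at boundary points is harmless but not needed.
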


\begin{proof}
Let $x \in U\setminus \p U$. For every tangent vector $X_x\in T_xM$ we can find a vector field 
$X\in\X_{\text{exact}}(M,\p M,\mu_0)$ such that $X(x)=X_x$ as shown in \ref{old3}.
Thus $\L_X f|_{U\setminus \p U} = 0$ for all $X \in \X_{\text{exact}}(M,\p M,\mu_0)$ 
implies $df = 0$ and hence $f$ is constant on $U\setminus \p U$, and by continuity also on $U$. 
\end{proof}

\begin{lemma}\label{old4} 
If for a distribution $A\in \mathcal D'(M)=C^\infty(M)'$  
we have $\L_X A = 0$ for all $X\in\X_{\text{exact}}(M,\p M,\mu_0)$, then $A=C\mu_0|U + B$  
for some constant $C$, where $B$ is a finite linear combination of delta distributions supported in the discrete set $\p^m U$. This  means 
$\langle A, f \rangle = C \int_M f \mu_0 + \sum_{x\in\p^mM} c_x. f(x)$  for all $f 
\in C^\infty(M)$.
\end{lemma}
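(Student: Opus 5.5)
Throughout, $\mathcal L_X A$ is defined by duality, $\langle \L_X A,f\rangle=-\langle A,\L_X f\rangle$. The plan is to argue locally on a good subset $U$ and then glue.

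\emph{Step 1: the interior.} Let $D\in\mathcal D'(U\setminus\p U)$ be the restriction of $A$ to the open interior, and write $D=h\,\mu_0$ with $h$ a distribution on the open set. For $\om\in\Om^{m-2}_c$ and $X=\hat\io_{\mu_0}\i(d\om)$ we have $\on{div}^{\mu_0}X=0$, so
$$
\L_X(h\mu_0)=(\L_X h)\,\mu_0 .
$$
Hence $\L_X h=0$ for all exact divergence-free fields supported in the interior. By \ref{old3}, in a small coordinate ball these fields include every constant field $\p_{u^\ell}$ near any given point. So all partial derivatives of $h$ vanish locally, and $h$ is locally constant. Connecting the interiors of good sets along chains of overlapping good sets in the connected manifold gives a single constant $C$ on $M\setminus\p M$. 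In the application the lemma is stated per $U$, but connectedness of $M\setminus \p M$ gives the global constant.

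\emph{Step 2: support on the boundary.} Set $A'=A-C\mu_0$. Then $A'$ still satisfies $\L_X A'=0$ for all exact $X$, because $\L_X\mu_0=0$. By Step 1, $A'$ is supported in $\p M$.

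We now work inductively down the strata, from $\p^1M$ to $\p^{m-1}M$, proving that $A'$ is supported in $\p^{\ge p+1}M$. Near a point $x\in\p^pU$ with $p<m$, choose corner coordinates $u^1,\dots,u^p\ge0$, $u^{p+1},\dots,u^m$ with $\mu_0=du^1\wedge\dots\wedge du^m$. Locally the distribution $A'$, supported on $\{u^1=\dots=u^p=0\}$ (after removing deeper corners), has finite order. It can therefore be written as a finite sum
$$
\sum_{\be}\p_{u'}^{\be}\delta_{u'=0}\otimes T_\be,\qquad u'=(u^1,\dots,u^p),
$$
where each $T_\be$ is a distribution in $u''=(u^{p+1},\dots,u^m)$. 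On the manifold with corners, "derivatives" should be read as pairing with the normal Taylor coefficients of $f$, since $C^\infty(M)'$ sits inside $\mathcal D'(\tilde M)$.

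To kill this sum, use the vector fields of \ref{old3}.

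\emph{(a) Tangential translations.} The fields $\p_{u^\ell}$ with $\ell>p$ show that each $T_\be$ is translation invariant in $u''$, hence $T_\be=c_\be\,du''$.

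\emph{(b) Linear fields $\bar X_A$.} Take $A_1=\on{diag}(a_1,\dots,a_p)$ and $A_2=\on{diag}(a_{p+1},\dots,a_m)$ with trace zero, together with shear terms $A_3$ (entries $u^j\p_{u^i}$ with $j\le p<i$). The diagonal fields act on the term of multi-index $\be$ by the weight
$$
\sum_{i\le p}(a_i)(\be_i+1)+\sum_{i>p}a_i .
$$
Choosing the $a_i$ suitably forces every $c_\be=0$ unless this weight vanishes for all admissible traceless choices. Since $p<m$, we can vary $a_{p+1}$ against $a_1$ independently, which gives a contradiction for every $\be$. Hence $A'$ vanishes near $\p^pM$ for $p<m$.

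\emph{Step 3: the corners.} What survives is supported in the finite set $\p^mM$. At a corner point $x$, all $m$ coordinates satisfy $u^i\ge0$, and the only admissible linear fields are the diagonal traceless ones; there is no tangential translation. A term $\p^\be\delta_x$ has weight $\sum_i a_i(\be_i+1)$ under such a field. With $\sum_i a_i=0$, this weight vanishes identically exactly when all $\be_i$ are equal. So the weight argument alone leaves the candidates $\p_{u^1}^{k}\cdots\p_{u^m}^{k}\delta_x$.

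To exclude these with $k\ge1$, use nonlinear exact fields, for example $\hat\io_{\mu_0}\i d(g\,u^1u^2\,\om')$ type fields tangent to the corner strata. These act on higher jets and should kill the $k\ge1$ terms, leaving $\sum_x c_x\delta_x$ as claimed.

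\emph{Main obstacle.} I expect the careful structure theorem for distributions supported on strata of a corner, together with exhibiting enough exact fields at corners to exclude the higher-order terms with balanced multi-indices, to be the main difficulty.
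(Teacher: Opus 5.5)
Your route differs from the paper's and can be made to work, but as written Step 2(b) fails on the key term and Step 3 is not carried out.

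\textbf{Comparison with the paper.} The paper never analyses distributions supported on strata. For $x\in\partial^pU$ with $p<m$, take $g\in C^\infty_c(U_x)$ with $\int g\mu_0=0$. Since $H^m_c(U_x,\partial U_x)=\mathbb R$, one has $g\mu_0=d\alpha$ with $\alpha\in\Omega^{m-1}_c(U_x,\partial U_x)$. Using a tangential coordinate $u^{p+1}$ (this is where $p<m$ enters), the paper writes $\alpha=\sum_j\bar f_j\,d\beta_j$ with $\beta_j\in\Omega^{m-2}(U_x,\partial U_x)$. This gives $g=\sum_j\mathcal L_{X_j}\bar f_j$ with exact $X_j$, hence $\langle A,g\rangle=0$. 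That single cohomological trick yields $A=C\mu_0$ near every point of codimension $<m$, interior and boundary alike, with no structure theorem, no finite-order bookkeeping and no induction over strata. Your Steps 1--2 (Schwartz's structure theorem, tangential translations, weights of diagonal linear fields) are a legitimate but longer substitute. Your Step 3 treats the corner explicitly, which the paper only asserts.

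\textbf{Step 2(b): the weight is miscomputed, and the error hits exactly the decisive term.} Take $T=\partial^\beta_{u'}\delta_{u'=0}\otimes du''$ and $X=\sum_ia_iu^i\partial_{u^i}$. The Leibniz rule gives $\sum_{i\le p}a_i\beta_i$ from the normal directions. For $i>p$, integrating $u^i\partial_{u^i}$ by parts along the stratum gives $-a_i$. Hence $\langle T,Xf\rangle=\bigl(\sum_{i\le p}a_i\beta_i-\sum_{i>p}a_i\bigr)\langle T,f\rangle$, which equals $\sum_{i\le p}a_i(\beta_i+1)$ when $\sum_ia_i=0$.

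Your expression $\sum_{i\le p}a_i(\beta_i+1)+\sum_{i>p}a_i$ collapses under the trace condition to $\sum_{i\le p}a_i\beta_i$; the $a_i$ cannot be varied independently. This vanishes for $\beta=0$, so as written you cannot exclude the boundary measure $f\mapsto c\int_{\partial^pM}f\,du''$. With the correct weight, $a_1=1$, $a_{p+1}=-1$ gives $\beta_1+1\neq0$, and every $c_\beta$ vanishes.

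\textbf{Step 3 stops at ``should kill''.} It can be completed as follows. The form $\omega=(u^1)^2u^2\,du^3\wedge\dots\wedge du^m$ (just $(u^1)^2u^2$ if $m=2$) pulls back to $0$ on every stratum. After a cut-off, $\hat\iota_{\mu_0}^{-1}d\omega=(u^1)^2\partial_{u^1}-2u^1u^2\partial_{u^2}=:X$ near $x$. Then $\partial^\beta(Xf)(x)=\bigl(\beta_1(\beta_1-1)-2\beta_1\beta_2\bigr)\partial^{\beta-e_1}f(x)$. For $\beta=(k,\dots,k)$ the coefficient is $-k(k+1)\neq0$. Distinct $k$ produce distinct multi-indices $\beta-e_1$, so every $c_k$ with $k\ge1$ vanishes and only $c_x\delta_x$ survives.

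With these two fixes your argument is complete. It then also justifies the corner step, which the paper only asserts with the remark that derivatives of delta distributions are never invariant.
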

 
This lemma proves the theorem for the case $n=1$. 
 
 
\begin{proof} We prove this first for any good set $U\subseteq M$.
Since $\langle \L_XA,f \rangle = -\langle  A,\L_Xf\rangle$, the invariance property 
$\L_XA|U = 0$ implies $\langle A, \L_X f \rangle = 0$ for all $f\in C^\infty_c(U)$. 
Clearly, $\int_M(\L_Xf)\mu_0 = 0$. 
For each $x\in \p^p U$ let $U_x\subset U$ be
an open oriented chart which is diffeomorphic to $Q^m_p$. We assume that $0\le p<m$.  
Let $g\in C^\infty_c(U_x)$ satisfy $\int_M g\mu_0 = 0$; we will show that $\langle A, g \rangle = 0$. 
Because the integral over $g\mu_0$ is zero, the compact cohomology class 
$[g\mu_0]\in H^m_c(U_x,\p U_x)= \mathbb R$ vanishes; see \ref{good}.
Thus there exists 
$\al\in \Om^{m-1}_c(U_x,\p U_x)\subset \Om^{m-1}(M, \p M)$ with    
$d\al=g\mu_0$. 
 
\noindent\textbf{Claim.} \emph{If $k<m$,
we can write $\al = \sum_j \bar f_j d\be_j$ for suitable $\bar f_j\in C^\infty_c(U_x)$ and 
$\be_j\in\Om^{m-2}(U_x,\p U_x)$.}
 
$U_x$ is diffeomorphic to $Q^m_p$ with coordinates 
$u^1\ge 0, \dots, u^p\ge 0, u^{p+1},\dots,u^m$ 
and  $j_{\p U_x}^*\al=0$, thus we can write uniquely
\begin{align*}
\al &= \sum_{k\le p} f_k u^k\,du^1\wedge \dots\widehat{du^k}\dots\wedge du^m 
+ \sum_{j> p} f_j du^1\wedge \dots \widehat{du^j}\dots\wedge du^m \,.
\end{align*}
Since $p<m$ we may continue as
\begin{align*}
&
= -(-1)^p\sum_{k\le p}  f_k d(u^k u^{p+1}du^1\wedge \dots\widehat{du^k}\dots\widehat{du^{p+1}}\dots\wedge du^m) 
\\&\qquad
+(-1)^{p} \sum_{k\le p}(-1)^k f_ku^{k+1}d(u^1\wedge \dots\widehat{du^{p+1}}\wedge  du^m)
\\&\qquad
+ \sum_{j>p} f_j d(u^1du^2 \wedge \dots \widehat{du^j}\dots\wedge du^m)
\\&
=: \sum_{j\ge1}\bar f_j d\be_j\quad \text{ for }\quad \bar f_j \in C_c^\infty(U_x)\quad\text{ and }\quad \be_j\in\Om^{m-2}(U_x,\p U_x)\,.
\end{align*}
So the claim follows in the case $p<m$. 
 
Choose $h\in C^\infty_c(U_x)$ with $h=1$ on $\bigcup_j\on{supp}(\bar f_j)$, so 
that $\al = \sum_j \bar f_j d(h\be_j)$ and $h\be_j\in\Om^{m-2}_c(M,\p M)$.
In particular the vector fields $X_j = \hat\io\i_{\mu_0}d(h\be_j)$ lie in $\X_{\text{exact}}(M,\p M,\mu_0)$ 
and we have the identity $\sum_{j} \bar f_j.i_{X_j}\mu_0 =\al$.
 This means 
\begin{align*}
\sum_j (\L_{X_j}\bar f_j)\mu_0 &= \sum_j\L_{X_j}(\bar f_j\mu_0) = \sum_j di_{X_j}(\bar f_j\mu_0) = d\Big(\sum_j 
\bar f_j.i_{X_j}\mu_0\Big) 
= d\al = g\mu_0\,,
\end{align*}
i.e., $\sum_j \L_{X_j}\bar f_j =  g$
leading to
\[
\langle A,g \rangle = \sum_j \langle A, \L_{X_j} \bar f_j \rangle = -\sum_j \langle \L_{X_j} A, \bar f_j \rangle = 0\,.
\]
So $\langle  A,g\rangle=0$ for all $g\in C^\infty_c(U_x)$ with $\int_M g\mu_0= 0$. Finally, choose a 
function $\ph$ with support in $U_x$ and $\int_M \ph \mu_0 = 1$. Then for any $f \in C_c^\infty(U_x)$, 
the function defined by $g = f - (\int_M f \mu_0). \ph$ in $C^\infty(M)$ satisfies $\int_M g \mu_0 = 0$ and so  
\[
\langle A,f \rangle = \langle A, g \rangle + \langle A, \ph \rangle \int_{M} f \mu_0 = C_x \int_{M} f \mu_0\,,
\]
with $C_x = \langle A, \ph\rangle$. Thus $A|U_x=C_x\mu_0|U_x$. Since $U$ is connected, the 
constants $C_x$ are all equal: If $U_x\cap U_y \ne \emptyset$, choose $\ph\in C^\infty_c(U_x\cap U_y)$ with $\int\ph\mu_0 =1$, then $C_x=\langle A,\ph\rangle=C_y$; let $C_U$ be the common value.
Hence $A-C\mu_0$ vanishes on $U\setminus \p^m U$; its support is contained in the discrete set (actually a point since $U$ is good) $\p^m U$. Thus 
$A|_U = C_U.\mu_0 +B \quad \text{ with } \on{supp}(B) \subseteq \p^mU$; furthermore $B$ is a multiple  of the point evaluation $\de_{\p^mU}$ since a derivative of a delta distribution is never $\Diff(M)$-invariant. 
 
Finally we cover $M$ by good open subsets $U$. Then all constants $C_U$ coincide since $M$ is connected, thus 
$$
A = C.\mu_0 +B
$$
where $B$ is a finite linear combination of delta distributions  supported on $\p^mM$.   
\end{proof} 
 
\subsection*{Remarks}\label{new2}  The proof of Lemma \ref{old4} does not work at points $x\in\p^m M$ as the following example shows.
For $p=m=2$ we have $\Om^1(U_x,\p U_x)\ni \al= f_1u^1du^2 + f_2 u^2du^1$ but
$\Om^0(U_x,\p U_x)= \{f\in C^{\infty}(U_x): f(x)=0\} = C^{\infty}(U_x).u^1.u^2$.

\begin{lemma}\label{old5} 
Each operator 
\begin{align*}
\check G_c:\;& C^\infty(M)\to \mathcal C^\infty(M^{n-1})'
\\&
f_i\mapsto \big((f_1,\dots\widehat{f_i}\dots, f_n)\mapsto G_c(f_1,\dots,f_n) \big)
\end{align*}
has the following property: If for $f\in C^\infty(M)$ and a connected open $U\subseteq M$ the 
restriction $f|U$ is constant, then $\L_{X^{\text{diag}}} (\check G_c(f))|U^{n-1} = 0$ for each exact vector field 
$X\in \X_{\text{exact}}(M,\p M,\mu_0)$.
\end{lemma}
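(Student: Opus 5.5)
The plan is to use the infinitesimal invariance identity for $\hat G_c$ and split it into the term where $X$ acts on $f$ and the terms where $X$ acts on the remaining arguments. Fix $f$ with $f|U$ constant and $X\in\X_{\text{exact}}(M,\p M,\mu_0)$. It suffices to test $\L_{X^{\text{diag}}}(\check G_c(f))$ against $f_1\otimes\dots\widehat{f_i}\dots\otimes f_n$ with all $f_j\in C^\infty_c(U)$, $j\ne i$. Products of this form span a dense subspace of $C^\infty_c(U^{n-1})$, and $\check G_c(f)$ is a distribution, so this determines its restriction to $U^{n-1}$. Here $X^{\text{diag}}$ denotes the diagonal field on $M^{n-1}$.

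The key steps, in order, are as follows.

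\textbf{Step 1.} By definition of the Lie derivative of a distribution we have
\begin{align*}
\langle \L_{X^{\text{diag}}}\check G_c(f), f_1\otimes\dots\widehat{f_i}\dots\otimes f_n\rangle
= -\sum_{j\ne i} G_c(f_1,\dots,\L_X f_j,\dots,f,\dots,f_n),
\end{align*}
where $f$ sits in slot $i$.

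\textbf{Step 2.} The invariance identity for $\hat G_c$ under $X^{\text{diag}}$ on $M^n$ (stated at the beginning of the proof of the Main Theorem) says that the full sum over all $n$ slots vanishes. Hence the right hand side above equals $G_c(f_1,\dots,\L_X f,\dots,f_n)$, with $\L_Xf$ in slot $i$.

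\textbf{Step 3.} Since $f$ is constant on $U$, $\L_Xf=df(X)$ vanishes on $U$. Note that $X$ is only tangent to the boundary, but $U$ is open in $M$, so this holds up to the boundary. Thus $\on{supp}(\L_X f)\subseteq M\setminus U$, while all other arguments are supported in $U$.

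\textbf{Step 4.} Show that the remaining term vanishes. This is the main obstacle: an $n$-linear form with a general distributional kernel need not vanish merely because the supports of its arguments are disjoint. For instance, $\int_M f_1\,\mu_0\cdot\int_M f_2\,\mu_0$ does not vanish for disjoint supports. So the claim cannot rest on a locality argument. I therefore expect the statement to be meant with test functions in the other slots, and Step 2 may genuinely be the whole content. I would reread the statement as follows: $\check G_c(f)$ restricted to $U^{n-1}$ satisfies
$$
\L_{X^{\text{diag}}}(\check G_c(f))=-\check G_c(\L_Xf),
$$
and "$=0$" is to be understood modulo contributions of $\check G_c$ applied to functions vanishing on $U$.

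\textbf{Step 5.} To get genuine vanishing I would try to use the $\Diff(M,\mu_0)$ symmetry more strongly. Since $\L_Xf$ has support outside $U$ and $X$ is exact, I would try to write $\L_Xf$ itself as $\sum_k \L_{Y_k} h_k$ with $Y_k\in\X_{\text{exact}}(M,\p M,\mu_0)$ supported away from $U$. Then the invariance identity, applied once more, moves the derivatives onto the other slots, where $\L_{Y_k}f_j=0$ because the $f_j$ are supported in $U$. The construction comes from the proof of Lemma \ref{old4}. Because $X=\hat\io_{\mu_0}\i(d\om)$ with $\om$ compactly supported in a good set $V$, we have $(\L_Xf)\mu_0=d(f\,d\om)$. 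This is exact with primitive $f\,d\om$, which vanishes where $f$ is constant and $d\om$ is irrelevant, and modulo constants it is supported off $U$. Moreover $\int_M(\L_Xf)\mu_0=0$. Hence, via the Claim in the proof of Lemma \ref{old4}, applied on charts $U_x$ covering $\on{supp}(\L_Xf)\cap(V\setminus U)$, we can decompose $\L_Xf=\sum_k\L_{Y_k}\bar f_k$ with $Y_k$ exact and supported where $f$ is non-constant, i.e.\ away from $U$. Substituting this in slot $i$, Step 2 applied to $Y_k$ reduces everything to terms with $\L_{Y_k}f_j=0$, giving the vanishing.

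The delicate point is to arrange that the supports of the $Y_k$ avoid $\overline{U}\cap\on{supp}(f_j)$, which requires shrinking the charts. It also fails at corners of codimension $m$ (Remark after Lemma \ref{old4}). I would handle this by choosing the supports of the $f_j$ compact in $U$ and using a cutoff $h$ equal to $1$ near the support of the primitive but vanishing on $\bigcup_j\on{supp} f_j$.
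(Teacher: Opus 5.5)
Your Steps 1--3 reproduce the paper's starting identity $\L_{X^{\text{diag}}}\check G_c(f)=\check G_c(\L_Xf)$. By your own Steps 1--2 the sign is plus, so the minus in Step 4 is a slip. The weakened reading of the statement you propose in Step 4 is not needed: the lemma holds as stated.

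The gap is in Step 5. You try to write $\L_Xf=\sum_k\L_{Y_k}\bar f_k$ by localizing to charts $U_x$ and invoking the Claim from the proof of Lemma \ref{old4}. That Claim is only established for strata of codimension $p<m$, so at points of $\p^mM$ your decomposition is not available, as you note yourself. Your closing remedy is only sketched, and as long as it uses the Claim's forms $\be_j$ it does not remove the corner problem. There are also two smaller inaccuracies:
\begin{itemize}
\item The primitive $f\,d\om$ does not vanish where $f$ is a nonzero constant. The primitive vanishing on $U$ is $(f-c)\,d\om$ with $c=f|U$.
\item The $Y_k$ cannot in general be supported away from $U$, since $\{f\ne c\}$ may accumulate at the frontier of $U$. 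What you need, and can arrange, is that they vanish near the compact set $\bigcup_{j\ne i}\on{supp}(f_j)\subset U$.
\end{itemize}

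The missing observation, which is the whole of the paper's proof, is that no decomposition is necessary. The form $(f-c)\,d\om$ is already a single term $\bar f\,d\be$ with $\be=\om$, so one global cutoff suffices. Write $X=\hat\io_{\mu_0}^{-1}(d\om)$ with $\om\in\Om^{m-2}_c(W,\p W)$. Choose an open $V$ with $\overline V\subset U$ containing the supports of your test functions, and $g\in C^\infty(M)$ with $g=1$ near $M\setminus U$ and $g=0$ on $V$. Then $Y:=\hat\io_{\mu_0}^{-1}(d(g\om))$ again lies in $\X_{\text{exact}}(M,\p M,\mu_0)$, because $g\om$ still pulls back to $0$ on every stratum, corners of codimension $m$ included. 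Moreover $Y=X$ near $M\setminus U$ and $Y=0$ on $V$. Since $df=0$ on $U$, this gives $\L_Xf=\L_Yf$ on all of $M$, hence
\[
\L_{X^{\text{diag}}}\check G_c(f)=\check G_c(\L_Xf)=\check G_c(\L_Yf)=\L_{Y^{\text{diag}}}\check G_c(f).
\]
The right-hand side vanishes on $V^{n-1}$ because $Y^{\text{diag}}$ does there. Such sets $V^{n-1}$ cover $U^{n-1}$, so the lemma follows with no chart-by-chart analysis and no exceptional corners.
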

 
\begin{proof} Without loss, let $i=1$.
For $x\in U$,
choose $g\in C^\infty(M)$ with $g=1$ near 
$M\setminus U$ and $g=0$ on a large open neighborhood $V$ of $x$. 
Then for any $X \in \X_{\text{exact}}(M,\p M,\mu_0)$, that is $X=\hat\io_{\mu_0}\i(d\om)$ for some 
$\om \in \Om^{m-2}_c(W,\p W)$ where $W\subset M$ is an oriented open set, let $Y = \hat\io_{\mu_0}\i(d(g\om))$. 
The vector field $Y\in \X_{\text{exact}}(M,\p M,\mu_0)$ equals $X$ near $M\setminus U$ and vanishes on $V$. Since $f_1$ is 
constant on $U$,  $\L_X f_1 = \L_{Y} f_1$. 
For all $f_2,\dots,f_n \in C^\infty(M)$ and each $Z\in\X(M,\p M,\mu_0)$ we have
\begin{align*}
\big\langle \L_{Z^{\text{diag}}} \check G_c(f_1)&, f_2\otimes\dots\otimes f_n \big\rangle 
= \left\langle \check G_c(f_1), -\L_{Z^{\text{diag}}} (f_2\otimes\dots\otimes f_n) \right\rangle
\\&
= -  G_c(f_1, \L_Z f_2,\dots, f_n)) - \dots -  G_c(f_1, f_2,\dots, \L_Z f_n)) 
\\&
= G_c(\L_Z f_1, f_2,\dots,f_n)
= \left\langle \check G_c(\L_Z f_1),  f_2\otimes\dots\otimes f_n\right\rangle,
\end{align*}
since $G_c$ is invariant. Thus also
\[
\L_{X^{\text{diag}}} \check G_c(f_1) = \check G_c(\L_X f_1) = \check G_c(\L_{Y} f_1) =
\L_{Y^{\text{diag}}}\check G_c(f_1)\,.
\]
Now $Y$ vanishes on $V$ and so does $\L_{X^{\text{diag}}} (\check G_c(f))$ on $V^{n-1}$. 
This holds also on $U^{n-1}$: Take $x_1,\dots,x_{n-1}\in U$ and choose $V$ in such a way that all $x_i\in V$.
\end{proof}
 
\begin{lemma}\label{old6} 
There exists a constant $C = C(c)$ such that 
the distribution $\hat G_c - C \mu_0{}^{\otimes n}$ is supported on the union of all partial diagonals
$$
D:=\{(x_1,\dots,x_n)\in M^n\;:\; \text{for at least one pair }i\ne j \text{ we have equality: }x_i=x_j\}\,.
$$
\end{lemma}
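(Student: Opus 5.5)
The plan is to localize at an arbitrary point $(x_1,\dots,x_n)\in M^n\setminus D$ and peel off one variable at a time with Lemma \ref{old4}, then glue the resulting constants together. I expect the corner points $\p^mM$ to be the decisive obstruction; in fact I believe the statement fails there for $n\ge2$ (see the last paragraph).

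\emph{Localization.} Since the $x_i$ are pairwise distinct, I choose good subsets $U_1,\dots,U_n$ (as in \ref{good}) with $x_i\in U_i$ and pairwise disjoint closures. Fix $f_j\in C^\infty_c(U_j)$ for $j\ge2$. Consider the distribution $A:f_1\mapsto G_c(f_1,f_2,\dots,f_n)$ restricted to $C^\infty_c(U_1)$. Let $X\in\X_{\text{exact}}(M,\p M,\mu_0)$ be given by $X=\hat\io_{\mu_0}\i(d\om)$ with $\om\in\Om^{m-2}_c(U_1,\p U_1)$. Then $\L_Xf_j=0$ for $j\ge2$, because the supports are disjoint. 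The infinitesimal invariance of $\hat G_c$ therefore reduces to $\langle A,\L_Xf_1\rangle=0$. The proof of Lemma \ref{old4} only uses exact fields supported in the chart, so it applies on $U_1$ and gives
$$
A|_{U_1}=C_1(f_2,\dots,f_n)\,\mu_0+c_1(f_2,\dots,f_n)\,\de_{\p^mU_1}.
$$
The $\de$-term is present only if $U_1$ meets $\p^mM$. Both coefficients are separately continuous multilinear in $(f_2,\dots,f_n)$. The same argument applies to them in the variable $f_2$ on $U_2$, and so on by induction over the variables. This shows that $\hat G_c|_{U_1\times\dots\times U_n}$ is a linear combination of tensor products, each factor being $\mu_0|_{U_i}$ or $\de_{\p^mU_i}$.

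\emph{Gluing.} The coefficient of $\mu_0^{\otimes n}$ on $U_1\times\dots\times U_n$ is locally constant on $M^n\setminus D$. This follows by comparing on overlaps of such product neighborhoods, exactly as the constants $C_x$ were compared in the proof of Lemma \ref{old4}. Since $m\ge2$, the configuration space $M^n\setminus D$ of a connected manifold is connected, so this coefficient is one constant $C=C(c)$. Hence $\hat G_c-C\mu_0^{\otimes n}$ vanishes on every product $U_1\times\dots\times U_n$ in which no $U_i$ meets $\p^mM$. Its support is therefore contained in $D\cup\{x:\ x_i\in\p^mM \text{ for some } i\}$.

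\emph{Main obstacle, and why I expect it cannot be overcome.} To obtain the statement exactly as worded, one must show that every coefficient of a term containing a factor $\de_{x_0}$, $x_0\in\p^mM$, vanishes off $D$. The natural attempt is the following. Invariance forces the coefficient of $\de_{x_0}$ in the first slot to be itself a $\Diff(M,\mu_0)$-invariant $(n-1)$-form. One would then try to rule this out using full $\Diff(M)$-invariance and the behaviour under rescaling $\mu_0\mapsto c\mu_0$. I do not expect this to work. Already for $n=2$, the form $G_\mu(\al_1,\al_2)=\frac{\al_1}{\mu}(x_0)\int_M\al_2$ is $\Diff(M)$-invariant: corners of codimension $m$ are isolated points, so every diffeomorphism isotopic to the identity fixes $x_0$. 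It is a product of generators listed in the Main Theorem, and its kernel is $\de_{x_0}\otimes\mu_0$, which is not supported on $D$. So for $n\ge2$, whenever $\p^mM\neq\emptyset$, the claim appears false as stated. The localization argument above proves the corrected version, with $D$ enlarged by the corner sets; that enlarged set is what I believe is actually needed in the rest of the proof of the Main Theorem. I flag this as a probable gap in the statement rather than in the method.
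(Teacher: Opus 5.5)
Your diagnosis is correct and your counterexample is valid. Take any $M$ with $\p^mM\neq\emptyset$ (e.g.\ $M=[0,1]^2$) and a corner $x_0\in\p^mM$. The form $G_\mu(\al_1,\al_2)=\frac{\al_1}{\mu}(x_0)\int_M\al_2$ is $\Diff_0(M)$-invariant, and it is a product of two generators listed in the Main Theorem. Your justification (isotopies fix $x_0$) covers only $\Diff_0(M)$. For the full $\Diff(M)$, which may permute corners, use the symmetrization $\sum_{x_0\in\p^mM}\frac{\al_1}{\mu}(x_0)\int_M\al_2$, which is an equally good counterexample. The kernel is $\hat G_c=c^{-1}\de_{x_0}\otimes\mu_0$ (note the factor $c^{-1}$). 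Its difference from any multiple of $\mu_0^{\otimes2}$ has support not contained in $D$. The statement fails even for $n=1$: there $D=\emptyset$, and the generator $\frac{\al}{\mu}((\p^mM)_j)$ is already a counterexample.

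The paper's proof uses the same strategy as yours: choose disjoint neighbourhoods, then peel off one variable at a time with Lemma \ref{old4}. It gets the needed invariance on $\bigcap_i(M\setminus\overline{U_{x_i}})$ from Lemma \ref{old5}, where you simply use exact fields supported in $U_1$. The error is exactly where you put it. The paper writes ``by lemma \ref{old4} there exists $C_n(f_1,\dots,f_{n-1})$ such that $G_c(f_1,\dots,f_n)=C_n(f_1,\dots,f_{n-1})\int_M f_n\mu_0$''. This silently drops the delta part $B$ of Lemma \ref{old4}, which is legitimate only when $U_{x_n}\cap\p^mM=\emptyset$.

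So the paper's argument, like yours, proves only the corrected statement: support in $D\cup\{x:\ x_i\in\p^mM \text{ for some } i\}$. The corner terms must then be carried through the induction in \ref{old9}, as you suggest. Your gluing step, using connectedness of $M^n\setminus D$ for $m\ge2$, also supplies something the paper never addresses. It shows that the constant produced by the peeling does not depend on the configuration $(x_1,\dots,x_n)$ or on the chosen neighbourhoods.
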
 
 
\begin{proof}
If  for $(x_1,\dots,x_n)\in M^n$ all $x_i$ are pairwise distinct, 
then there exist connected open neighborhoods $U_{x_i}$ of $x_i$ for all $i$ 
such that 
$\overline{U_{x_i}}\cap \overline{U_{x_j}}=\emptyset$ for $i\ne j$.  
Choose any functions 
$f_i\in C^\infty(M)$ with $\on{supp}(f_i)\subset U_{x_i}$. Then 
$f_1|(M\setminus \overline{U_{x_1}}) = 0$, so by lemma \ref{old5}, 
$\L_{X^{\text{diag}}}(\check G_c(f_1))|(M\setminus \overline{U_{x_1}})^{n-1} = 0$ for each $X\in \X_{\text{exact}}(M,\p M,\mu_0)$. 
We repeat this: We have $f_2|(M\setminus \overline{U_{x_2}}) = 0$, so by lemma \ref{old5} again, 
$$
\L_{X^{\text{diag}}}(\check G_c(f_1,f_2))|\big((M\setminus \overline{U_{x_1}})\cap (M\setminus \overline{U_{x_2}})\big)^{n-2} = 0
$$ 
for each $X\in \X_{\text{exact}}(M,\p M,\mu_0)$. 
Finally, 
$$
\L_{X}(\check G_c(f_1,f_2,\dots,f_{n-1}))|
\big((M\setminus \overline{U_{x_1}})\cap (M\setminus \overline{U_{x_2}})\cap \dots\cap (M\setminus \overline{U_{x_{n-1}}})\big) = 0\,,
$$ 
thus by lemma \ref{old4} there exists $C_n(f_1,\dots,f_{n-1})$ such that 
$$
G_c(f_1,\dots,f_{n}) = C_n(f_1,\dots,f_{n-1}) \cdot \int_M f_n\mu_0
$$
since $\on{supp}(f_n)\subset U_{x_n}\subset \bigcap_{i=1}^{n-1} (M\setminus \overline{U_{x_i}})$.
Now $C_n(f_1,\dots,f_{n-1})$ is a bounded $(n-1)$-linear operator which is again invariant, and the $f_i$ have still pairwise disjoint supports, so we can repeat the argument above to obtain that  
$$
C_n(f_1,\dots,f_{n-1}) = C_{n-1}(f_1,\dots,f_{n-2}) \cdot \int_M f_{n-1}\mu_0
$$
and finally that 
$$
G_c(f_1,\dots,f_{n}) = C_1(c)\cdot \int_M f_1\mu_0\cdot\dots \cdot \int_M f_n\mu_0
$$
for a constant $C_1(c)=C(c)$. Thus the distribution $G_c-C(c)\mu_0{}^{\otimes n}$ vanishes on 
$f_1\otimes\dots\otimes f_n$ if the $f_i$ have pairwise disjoint supports. Such tensor products of functions generate a dense subspace in $C^\infty_c(M^n\setminus D)$, thus the distribution has support in $D$.
\end{proof}

\subsection{Next steps in the proof of the Main Theorem \ref{maintheorem}}\label{old7} 
We replace $\hat G_c\in \mathcal D'(M^n)$ by $\hat G_c - C\mu_0^{\otimes n}$ and thus assume without loss that the constant $C$ in Lemma \ref{old6} is 0.
Let $(U,u)$ be a good chart on $M$ with coordinates $u^1\ge 0, \dots, u^k\ge0, u^{k+1},\dots, u^m$ 
such that $\mu_0|U = du^1\wedge \dots\wedge du^m$. By \ref{old6}, the distribution $\hat G_c|{U^n}\in \mathcal D'(U^n)$ has support contained in the union $D(U)$ of all partial  diagonals 
and is of finite order $k$ since $M$ is compact.

\begin{lemma}\label{old8}
Let $\hat G \in \mc D'(M^n)$ be a $\on{Diff}(M, \mu_0)$-invariant distribution, supported on the full diagonal
$\De(M) = \{ (x_1, \dots, x_n) \in M^n \,:\, x_1 = \dots = x_n \} \subset M^n$. 
 
Then there exist  constant $C$ and $C_{i_1,\dots,i_m}$ for $1\le i_1<\dots<i_m\le n$ such that 
\begin{align*}
G(f_1, \dots, f_n) &= C \int_M f_1 \dots f_n\cdot \mu_0 +
\\&
+ \sum_{\text{partitions}} C_{S_0,\dots,S_k}
\int_M \prod_{i\in S_0} f_i\cdot J_{S_1}\cdots J_{S_k}\cdot \mu_0
\\&\quad \text{ where } \{1,\dots,n\}= S_0 \sqcup S_1\sqcup \dots\sqcup S_k \text{ and } |S_\ell| = m \text{ for  } \ell>0,
\\&\quad
\text{ and where }
J_{S_\ell}(f_{j_1},\dots,f_{j_m}) = \frac{ \bigwedge_{j\in S_\ell} df_{j}}{\mu_0}\;,\quad S_\ell=\{j_1,\dots,j_m\},
\\&
 + \sum_{\text{ partitions}} C_{R_0,R_1} \int_{\p^{m-|R_1|} M} \prod_{i\in R_0} f_i\cdot \bigwedge_{j\in R_1}  df_{j}
 \\&\quad\text{ where } \{1,\dots,n\} = R_0 \sqcup R_1 \text{ with } |R_1|=1,\dots, m-1.
\end{align*}
\end{lemma}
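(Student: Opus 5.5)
The plan is to use the structure theorem for distributions supported on a submanifold, and then let the infinitesimal invariance under exact divergence free vector fields from \ref{old3} cut down the possible terms. Since $\hat G$ is supported on $\De(M)$ and has finite order $k$ (because $M$ is compact), we can localize with a partition of unity subordinate to good charts $(U,u)$, with $\mu_0|U=du^1\wedge\dots\wedge du^m$. On each chart we write
\[
G(f_1,\dots,f_n)=\sum_{|\al_1|+\dots+|\al_n|\le k}\int_U a_{\al_1\dots\al_n}\,\p^{\al_1}f_1\cdots\p^{\al_n}f_n\,\mu_0
\;+\;(\text{terms supported on }\p U),
\]
where the $a$ are distributions on $U$. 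The boundary terms are distributions supported on the strata $\p^pU$ and involve derivatives of the $f_i$ of all orders, including normal ones. We treat the interior part and the boundary strata in turn, descending in codimension $p=0,1,\dots,m$. On each stratum we use the vector fields $\bar X_A$ of \ref{old3}, with $A\in\mathfrak{sl}(m)$ of the block form given there, together with the translations $\pm\p_{u^\ell}$ tangent to the stratum, all realized as exact fields near a point.

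\emph{Step 1 (translations and linear fields).} Invariance under the $\p_{u^\ell}$ tangent to the stratum makes the coefficients constant along the stratum. Invariance under the linear fields $\bar X_A$ turns the coefficient tensors into $\mathfrak{sl}$-invariants, and for the interior these are polynomial in the $\p f_i$. By classical invariant theory for $SL(m)$ acting on several vectors (here the jets $\p^{\al}f_i$), the invariants are generated by contractions with the volume form, i.e.\ by determinants $\det(\p f_{j_1},\dots,\p f_{j_m})=J_{S}$. Higher jets contribute only through the symmetric tensor powers.

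\emph{Step 2 (exactness of nonlinear fields kills higher jets).} Next we use quadratic exact fields, $X=\hat\io_{\mu_0}\i d(\text{cubic }(m-2)\text{-form})$. For these the Lie derivative of a second derivative $\p^2 f$ picks up a term $\p X\cdot\p f$ whose structure cannot be absorbed, and this should force all coefficients of $\p^{\al}f_i$ with $|\al|\ge2$ to vanish. This is the analogue of the step in \cite{BBM2016}. I expect it to be the main obstacle, and the delicate point is that a determinant of second derivatives could survive $\mathfrak{sl}$ invariance. I would first handle the case of a single nonconstant function with all other $f_i$ locally constant, and then argue by induction on the total order, always looking at the leading symbol. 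Each $J_S$ needs $m$ distinct indices, because $df\wedge df=0$. Products of such $J$'s against $\prod_{S_0}f_i$ give the interior terms. Gluing the local constants over overlapping charts, using connectedness of $M$ as in Lemma \ref{old4}, gives global constants $C$ and $C_{S_0,\dots,S_k}$.

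\emph{Step 3 (boundary strata).} On $\p^pU$ the available fields restrict to the divergence free fields of the stratum, which preserve no volume there. The constraint that $A_1$ be diagonal with $\on{Tr}A_1=-\on{Tr}A_2$ allows scaling in the normal directions to be compensated by scaling along the stratum. This forces a boundary term to transform like a top form on $\p^pM$. Normal derivatives are excluded because the fields $u^k\p_{u^\ell}$ ($k\le p<\ell$, the $A_3$ block) mix them with tangential ones. The tangential part then has to be $\prod_{R_0}f_i\cdot\bigwedge_{R_1}d(f_j|_{\p^pM})$ with $|R_1|=m-p$, integrated over the stratum. For $p=m$ this gives the point evaluations $\prod f_i(x)$, which are the $R_1=\emptyset$ case together with Lemma \ref{old4}; here one checks as in Lemma \ref{old4} that derivatives of deltas are not invariant. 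Finally, the constants are constant on each connected component of each stratum.
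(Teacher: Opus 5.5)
Your Step~2 is where the argument breaks, and it cannot be repaired. You only say that quadratic exact fields ``should force'' the coefficients of $\p^{\al}f_i$ with $|\al|\ge2$ to vanish. That claim is the whole content of the interior classification, and it is false, even in the weaker form that such coefficients can be removed by integrating by parts (your representation, with derivatives on all $n$ slots, is not unique). The reason is that $J_S$ is natural under $\Diff(M,\mu_0)$, so nested Jacobians are natural too.

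Take $m=2$, put $\{f,g\}=df\wedge dg/\mu_0$, and let $G(f_1,\dots,f_5)=\int_M\{\{f_1,f_2\},f_3\}\{f_4,f_5\}\,\mu_0$. This form is $\Diff(M,\mu_0)$-invariant and its kernel lies on the full diagonal. It is not zero: in an interior chart with $\mu_0=du^1\wedge du^2$, take $f_2=f_3=g\in C^\infty_c$ and $f_1=u^2$, $f_4=u^1$, $f_5=\frac12(u^2)^2$ near $\on{supp}g$; two integrations by parts give $G=-\int(\p_1g)^2\mu_0$.

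Now replace every $f_i$ by $f_i(\cdot/t)$ in a chart. This multiplies an integral, over a $d$-dimensional stratum, of an integrand of total differential order $r$ by $t^{d-r}$, so terms of different weight are linearly independent. In an interior chart $G$ has weight $t^{-4}$, while the $n=5$ terms of the lemma have weights $t^2,t^0,t^{-2}$. So higher jets genuinely survive, and the lemma as stated is false. For general $m$, use $J(J(f_1,\dots,f_m),f_{m+1},\dots,f_{2m-1})\,J(f_{2m},\dots,f_{3m-1})$ with $n=3m-1$.

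Your remark that determinant-type expressions in second derivatives survive $\mf{sl}(m,\R)$-invariance is correct, and it is exactly what the paper's proof overlooks. Its Step~3 derives $|\al_j|\le1$ from $\mf{sl}(m,\R)$-invariance alone, via the First Fundamental Theorem and irreducibility of $\on{Sym}^r(V^*)$. But a tensor product of non-trivial irreducibles can carry invariants: for $m=2$, the symbol $\det(\xi,\eta)^2$ lives in two slots of order $2$. Moreover, in the paper's normalization with $f_n$ underived, the genuine invariant $\int_M f_4\{\{f_1,f_2\},f_3\}\mu_0$ has second derivatives. So you cannot borrow that step from the paper either.

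Your Step~3 fails for the same reason. The $A_3$-block fields $u^k\p_{u^\ell}$ exclude only those normal derivatives that do not assemble into a full $m$-dimensional Jacobian; $J_S$ restricted to a stratum remains invariant. For $m=2$, consider $G(f_1,f_2,f_3)=\int_{\p^1M}\{f_1,f_2\}\,df_3$, which equals $\int_M\{\{f_1,f_2\},f_3\}\mu_0$ by Stokes. It is $\Diff(M,\mu_0)$-invariant, its kernel lies on the diagonal, and it is non-zero: near a boundary point with $u^1\ge0$, taking $f_1=u^1$ and $f_2=f_3=g$ gives $\pm\int(\p_2g)^2\,du^2$. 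In a boundary chart it has weight $t^{-2}$, while the $n=3$ terms of the lemma have weights $t^2$ and $t^0$.

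Likewise, at a corner $x\in\p^mM$ the form $J(f_1,\dots,f_m)(x)$ is invariant, because exact fields vanish there. Note also that your point evaluations $\prod f_i(x)$ are not in the statement, which requires $|R_1|\ge1$. So the boundary list you aim for, like the one in the statement, is too small. The statement itself must be enlarged before either your argument or the paper's can be completed: at least by nested Jacobians, and by Jacobians restricted to strata and to corner points.
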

 
\begin{proof}
Let $(U, u)$ be a good open chart on $M$, diffeomorphic to  $Q^m_p$ with coordinates
$u^1\ge 0, \dots, u^p\ge 0, u^{p+1}, \dots, u^m$, such that $\mu_0|U = du^1 \wedge \dots \wedge du^m$. The distribution $\hat G|_U \in \mc D'(U^n)$ has support contained in the full diagonal 
$\De(U)=\{(x,\dots,x) \in U^n \,:\, x \in U\}$ and is of finite order $k$ since $M$ is compact. By \cite[Thm.~2.3.5]{Hor1983}, the corresponding multilinear form $G$ can be written as
\[
G(f_1, \dots, f_n) = \sum_{|\al_1| + \ldots + |\al_{n-1}| \leq k} 
\left\langle A_{\al_1, \dots, \al_{n-1}}, \p^{\al_1} f_1\ldots\p^{\al_{n-1}} f_{n-1}. f_n \right\rangle\,,
\]
with multi-indices $\al_j = (\al_{j,1}, \dots, \al_{j,m})$ and distributions $A_{\al_1, \dots, \al_{n-1}} \in \mc D'(U)$ of order $k-|\al_1| - \ldots -|\al_{n-1}|$. Furthermore the distributions in this representation are uniquely determined. We shall write $\al = (\al_1, \dots, \al_{n-1})$ and $|\al| = |\al_1| + \ldots +|\al_{n-1}|$. The special role of $f_n$ here is without loss of generality. It can be any other $f_i$, expressing the fact that $|S_0|\ge 1$ since otherwise the integrand is exact and the integral vanishes. 
 
For $x \in \p^p U$ we choose a good set $U_x$ with $x \in U_x \subset \overline{U_x} \subset U$ and $X_{ij}, X_k \in \mf X_{\mathrm{exact}}(M, \p M, \mu_0)$  with
\begin{align*}
X_{ij}|_{U_x} &= u^i\p_{u^i}- u^j\p_{u^j}
\quad\text{ for } i\le p \text{ and  }i< j \\
X_k|_{U_x} &= \p_{u^k}\quad\text{ for  }k=p+1,\dots,m\,.
\end{align*}
Given functions $f_1, \dots, f_n \in C^\infty_c(U_x)$ and $p+1 \leq k \leq m$ , 
we have by the invariance of $G$,
\begin{align*}
0 &= \sum_{j=1}^n G(f_1, \dots, \L_{X_k}f_j,\dots, f_n) = 
\left\langle \hat G|_{U^n}, \sum_{j=1}^n f_1 \otimes \dots \otimes \L_{X_k}f_j \otimes \dots \otimes f_n \right\rangle
\\&
=\sum_\al \left\langle  A_\al, 
\sum_{j=1}^{n-1} \p^{\al_1}f_1 \ldots \p^{\al_j} \p_{u^k} f_j \ldots \p^{\al_{n-1}} f_j . f_n +
\p^{\al_1} f_1\ldots\p^{\al_{n-1}} f_{n-1}. \p_{u^i}f_n
\right\rangle
\\&
=\sum_\al \left\langle  A_\al, 
\p_{u^k}(\p^{\al_1} f_1\ldots\p^{\al_{n-1}} f_{n-1}. f_n) \right\rangle
=\sum_\al \langle -\p_{u^k} A_\al, \p^{\al_1} f_1\ldots\p^{\al_{n-1}} f_{n-1}. f_n \rangle\,.
\end{align*}
Since the corresponding operator has again a kernel distribution that is supported on the diagional and since the distributions in the representation are unique, we can conclude that $\p_{u^k} A_\al|_{U_x} = 0$ for each $\al$ and each $p<k\le m$, when $x \in \p^p M$.
 
For the vector fields $X_{ij}$ we use the identity, where $1\le \ell\le n-1$,
\begin{align*}
\p^{\al_\ell} \left( u^i \p_{u^i} - u^j \p_{u^j} \right) f
&= \left( u^i \p_{u^i} \p^{\al_\ell} + \al_{\ell,i} \p^{\al_\ell} - u^j \p_{u^j} \p^{\al_\ell} - \al_{\ell,j} \p^{\al_\ell} \right) f \\
&= \left( u^i \p_{u^i} - u^j \p_{u^j} + \al_{\ell,i} - \al_{\ell,j} \right) \p^{\al_\ell} f \\
&= \left( \p_{u^i} u^i - \p_{u^j} u^j + \al_{\ell,i} - \al_{\ell,j} \right) \p^{\al_\ell} f\,.
\end{align*}
Applied to the distribution $\hat G$ we obtain
\begin{align*}
0 &= \sum_{k=1}^n G(f_1, \dots, \L_{X_{ij}}f_k,\dots, f_n) \\
&= \sum_\al \left\langle  A_\al, 
\Big(\p_{u^i}u^i - \p_{u^j} u^j + \sum_{k=1}^{n-1} (\al_{k,i} - \al_{k,j})\Big) (\p^{\al_1} f_1\ldots\p^{\al_{n-1}} f_{n-1}. f_n) \right\rangle\,.
\end{align*}
Set $\be = -\sum_{k=1}^{n-1} \al_{k,i} - \al_{k,j}$. Then we obtain via uniqueness
\begin{align*}
0 &= -\left(\p_{u_i} A_\al\right) u^i + \left(\p_{u_j} A_\al\right) u^j - \be A_\al \\
&= -\left(\p_{u_i} A_\al\right) u^i - \be A_\al\quad\text{ if }j>p
\end{align*}
on $U_x$, since we have seen above that $\p_{u^j} A_\al|_{U_x} = 0$ for $j>p$.
 
\smallskip\noindent
\emph{In the interior: Suppose $U\subseteq M \setminus \p M$.} 
To see that $\p_{u^i}A_\al =0$ for all $i$ implies that $A_\al|U_x = C_{\al} \mu_0|U_x$, let 
$f \in C_c^\infty(U_x)$ with 
$\int_M f \mu_0 = 0$. Then there exists $\om \in \Om_c^{m-1}(U_x)$ with 
$d\om = f\mu_0$. In coordinates we have 
$\om = \sum_i \om_i. du^1 \wedge \ldots \widehat{du^i} \ldots \wedge du^m$, 
and so $f = \sum_i (-1)^{i+1} \p_{u^i} \om_i$ with $\om_i \in C_c^\infty(U_x)$. Thus  
\begin{align*}
\langle A_\al, f \rangle &= \sum_i (-1)^{i+1} \langle A_\al, \p_{u^i}\om_i \rangle
= \sum_i (-1)^{i} \langle \p_{u^i} A_\al, \om_i \rangle = 0\,.
\end{align*}
Hence $\langle A_\al, f \rangle = 0$ for all $f \in C_c^\infty(U_x)$ with zero integral and as in 
the proof of \ref{old4} we can conclude that $A_\al|U_x = C_{\al} \mu_0|U_x$.
 
But then, for every $f_n\in C_c^\infty(U_x)$,
\begin{align}
G(f_1,\dots,f_n) &= \int_{U_x} L(f_1,\dots,f_{n-1})\, f_n\, \mu_0\,,\qquad \text{ where }\label{eq:old8.1}
\\
L(f_1,\dots,f_{n-1})&:=\sum_{|\al| \leq k} C_\al\, \p^{\al_1} f_1\dots \p^{\al_{n-1}} f_{n-1}\,. 
\end{align}
 
We now determine $L$. Write $\bar X_B = \sum_{i,j} B^i_j u^j \p_{u^i}$ for $B=(B^i_j)\in \mf{sl}(m,\R)$, as in \ref{old3}. Since $x\in M\setminus\p M$ we may take $U_x$ with $\p U_x = \emptyset$, so \emph{every} such $\bar X_B$, cut off outside $U_x$, lies in $\X_{\mathrm{exact}}(M,\p M,\mu_0)$ -- there is no tangency restriction to impose.
 
\smallskip\noindent\emph{Step 1 (a commutator identity).} For $f\in C^\infty(U_x)$ and a single index $\ell$, since $\p_{u^\ell}u^j=\de^j_\ell$,
\[
\p_{u^\ell}(\bar X_B f) = \sum_i B^i_\ell\, \p_{u^i} f + \bar X_B(\p_{u^\ell} f)\,,\qquad\text{i.e.}\qquad [\p_{u^\ell}, \bar X_B] = \sum_i B^i_\ell\, \p_{u^i}\,.
\]
Iterating this for a multi-index $\be$ (Leibniz rule for commutators) gives
\begin{equation}
\p^\be(\bar X_B f) = \bar X_B(\p^\be f) + \sum_{k=1}^m \be_k \sum_{i=1}^m B^i_k\, \p^{\be-e_k+e_i} f\,, \label{eq:old8.2}
\end{equation}
where $e_k$ denotes the $k$-th unit multi-index.
 
\smallskip\noindent\emph{Step 2 (the invariance constraint).} Fix $f_1,\dots,f_{n-1},f_n\in C_c^\infty(U_x)$. Invariance of $G$ under $\bar X_B$ gives $0=\sum_{j=1}^n G(f_1,\dots,\L_{\bar X_B}f_j,\dots,f_n)$, which by \eqref{eq:old8.1} reads
\[
0 = \int_{U_x} \Big( \sum_{j=1}^{n-1} L(f_1,\dots,\bar X_B f_j,\dots,f_{n-1})\, f_n \;+\; L(f_1,\dots,f_{n-1})\,\bar X_B f_n \Big)\, \mu_0\,.
\]
By the Leibniz rule the integrand equals $\bar X_B\big(L(f_1,\dots,f_{n-1})f_n\big)$ plus the correction terms produced by \eqref{eq:old8.2} applied to each $\p^{\al_j}f_j$ (there is no correction from $f_n$ itself, since it enters \eqref{eq:old8.1} with $\al_n\equiv0$). Because $\on{Tr}(B)=0$, $\bar X_B$ is divergence free, so $\int_{U_x}\bar X_B(h)\,\mu_0 = \int_{U_x}\on{div}_{\mu_0}(h\bar X_B)\,\mu_0=0$ for every $h\in C_c^\infty(U_x)$ by Stokes; hence the $\bar X_B(\cdot)$-part of the identity contributes nothing, and what remains is
\[
0=\sum_\al C_\al \sum_{j=1}^{n-1}\sum_{k,i} \al_{j,k}\, B^i_k \int_{U_x} \p^{\al_j-e_k+e_i}f_j \prod_{\ell\ne j}\p^{\al_\ell}f_\ell \cdot f_n\,\mu_0\,.
\]
Since $f_1,\dots,f_n$ are arbitrary, uniqueness of the kernel representation forces, for every $\ga=(\ga_1,\dots,\ga_{n-1})$ and every $B\in\mf{sl}(m,\R)$,
\begin{equation}
\sum_{j=1}^{n-1}\sum_{k,i} B^i_k\, (\ga_j+e_k-e_i)_k\; C_{(\dots,\ga_j+e_k-e_i,\dots)} = 0\,. \label{eq:old8.3}
\end{equation}

The shift $\al_j\mapsto\al_j-e_k+e_i$ preserves $|\al_j|$, so the per-slot orders $r_j:=|\al_j|$ are separately conserved in \eqref{eq:old8.3}: for each fixed tuple $(r_1,\dots,r_{n-1})$, the tensor $(C_\al)_{|\al_j|=r_j} \in \bigotimes_{j=1}^{n-1}\on{Sym}^{r_j}(V^*)$, $V=\R^m$, is annihilated by the natural diagonal action of every $B\in\mf{sl}(m,\R)$, i.e., it is $\on{SL}(m,\R)$-invariant.
 
\smallskip\noindent\emph{Step 3 (classification of the invariant).} Polarizing each factor $\on{Sym}^{r_j}(V^*)$ into $r_j$ symmetrized copies of $V^*$ turns $(C_\al)$ into an ordinary $\on{SL}(m,\R)$-invariant tensor in $(V^*)^{\otimes\sum r_j}$. By the First Fundamental Theorem for $\on{SL}(m,\R)$ acting on several covectors (see e.g.\ \cite[Ch.~II]{Weyl46}), such an invariant is a linear combination of products of $m\times m$ determinants $\det(\xi_{k_1},\dots,\xi_{k_m})$, taken over a partition of the polarized copies into blocks of size $m$, and vanishes identically unless $m$ divides $\sum r_j$. A determinant with a repeated argument vanishes, so two polarized copies coming from the same slot $j$ can never sit in the same block; and since $\on{Sym}^r(V^*)$ is $\on{SL}(m,\R)$-irreducible and non-trivial for $r\ge1$ ($m\ge2$), it carries no invariant vector by itself, so a slot with $r_j\ge1$ can only occur with $r_j=1$. Hence $C_\al\ne0$ only if every $\al_j$ with $|\al_j|\ge1$ satisfies $|\al_j|=1$, and these order-one slots partition into groups of size $m$; by antisymmetry of the determinant, the coefficients on each such group $S_\ell=\{i_1,\dots,i_m\}\subset\{1,\dots,n-1\}$ depolarize into a multiple of the Jacobian: $\p^{\al_{i_1}}f_{i_1}\cdots\p^{\al_{i_m}}f_{i_m}$, summed over the corresponding $C_\al$'s, recombines into 
\[
J_{S_\ell}(f_{i_1},\dots,f_{i_m}) = \frac{df_{i_1}\wedge \dots \wedge df_{i_m}}{\mu_0}
=\frac{df_{i_1}\wedge \dots \wedge df_{i_m}}{du^1\wedge \dots\wedge  du^m}\,.
\]
 
\smallskip\noindent\emph{Step 4 (reassembly).} Consequently
\[
L(f_1,\dots,f_{n-1}) = \sum_{\substack{S_0\sqcup S_1\sqcup\cdots\sqcup S_k=\{1,\dots,n-1\}\\ |S_\ell|=m\ (\ell\ge1)}} C_{S_0,\dots,S_k} \prod_{i\in S_0} f_i \cdot J_{S_1}\cdots J_{S_k}\,,
\]
and by \eqref{eq:old8.1}, absorbing $f_n$ into $S_0$,
\[
G(f_1,\dots,f_n) = \sum_{\substack{S_0\sqcup S_1\sqcup\cdots\sqcup S_k=\{1,\dots,n\}\\ n\in S_0,\ |S_\ell|=m\ (\ell\ge1)}} C_{S_0,\dots,S_k} \int_{U_x} \prod_{i\in S_0} f_i \cdot J_{S_1}\cdots J_{S_k}\ \mu_0\,,
\]
the $S_1=\dots=S_k=\emptyset$ term being $C_{\{1,\dots,n\}}\int_{U_x}f_1\cdots f_n\,\mu_0$, i.e.\ the constant $C$ of the Lemma. This is exactly the shape asserted in \ref{old8}; the local constants $C_{S_0,\dots,S_k}$ patch into global constants on $M\setminus\p M$ exactly as in the last paragraph of the proof of \ref{old4} (test against a fixed $\ph\in C_c^\infty$ with $\int_M\ph\,\mu_0=1$, supported in the overlap of two good sets).
 
This proves the Lemma for $x\in M\setminus\p M$.

\smallskip\noindent\emph{Along boundary components.} Suppose that $\p U$ is not empty and that $x\in \p^p U$ for $p>0$.
For a good neighborhood $U_x$ of $x$ in $U$ we have coordinates 
$u^1\ge 0, \dots, u^p\ge 0, u^{p+1}, \dots, u^m$, such that $\mu_0|U_x = du^1 \wedge \dots \wedge du^m$.
As noted in \ref{old3}, for a matrix $B\in\mathfrak{sl}(m,\mathbb R)$ the vector field $\bar X_B$ is tangent to the codimension $p$ boundary $\p^p U_x$ if
$B = \begin{pmatrix} B_1 & 0 \\ 0 & B_2\end{pmatrix}$ is $(p,m-p)$ block diagonal with $B_1$ itself diagonal; here $\on{Tr}(B_1)= -\on{Tr}(B_2)$. We only ever use such $B$ below.

\smallskip\noindent\emph{Reduction to the corner locus.} On $V:=U_x\cap\{u^1>0,\dots,u^p>0\}$ -- open, dense in $U_x$, and an open subset of $M\setminus\p M$ -- the interior classification of Steps~1--4 applies directly and gives $A_\al|_V=C_\al\,\mu_0|_V$ for the \emph{same global constants} $C_\al$ already found there: nonzero only for the all-zero pattern, or for $\al$ partitioning into full $m$-direction blocks (order $1$ in each of $m$ distinct coordinates, transverse and tangential alike, not restricted to the tangential ones). Since $A_\al|_{U_x}=a_\al(u^1,\dots,u^p)\,\mu_0$ depends only on the transverse coordinates (translation invariance tangentially, established above), this pins $a_\al\equiv C_\al$ on the whole positive orthant $(0,\infty)^p$, not merely up to homogeneity. So whenever $\al$ is \emph{not} such a pattern -- in particular whenever $\al$ mixes a partial tangential block with any transverse order, or repeats a tangential block -- $C_\al=0$ and $a_\al$ vanishes identically on $(0,\infty)^p$: there is no room for a smooth, non-corner-supported solution (such as $(u^i)^{d-1}$) to survive alongside a vanishing constant, since homogeneity alone only fixes $a_\al$ up to scale, while vanishing on a dense open set fixes it to be exactly $0$ there. Hence every $\al$ we still need to determine has $a_\al$ supported on $U_x\cap\p M$.

\smallskip\noindent\emph{Tangential classification.} Take $B_1=0$ and $B_2\in\mf{sl}(m-p,\R)$ arbitrary, so $B\in\mf{sl}(m,\R)$ outright and $\bar X_B$ acts trivially on $u^1,\dots,u^p$. Repeating Steps~1--4 verbatim, with the roles of $u^1,\dots,u^m$ and $\mf{sl}(m,\R)$ played now by the tangential coordinates $u^{p+1},\dots,u^m$ and $\mf{sl}(m-p,\R)$, shows: $A_\al$ can only be nonzero when every tangential order $\al_{j,k}$ ($k>p$) satisfies $\al_{j,k}\le1$, these order-one slots partitioning into groups of size $m-p$, recombining by antisymmetry into the wedge $\bigwedge_{i\in S}df_i$ for $S=\{i_1,\dots,i_{m-p}\}\subset\{1,\dots,n\}$. Unlike the interior Jacobians $J_S=|\bigwedge_{i\in S}d(\al_i/\mu)|/\mu$, which divide an $m$-form by the $m$-form $\mu$ to get a genuine function, $\bigwedge_{i\in S}df_i$ is only an $(m-p)$-form: there is no top-degree form on $M$ of the matching degree to divide by, so this block is not, and cannot be made into, a scalar. (In the natural sense of scaling weight it is a density of weight $(m-p)/m$ relative to $\mu_0$.)

\smallskip\noindent\emph{At most one wedge block.} Consequently at most one such block can occur. Since each block is an $(m-p)$-form and not a function, the only invariant way to combine two of them, $S_1\ne S_2$, is to wedge them: $\bigwedge_{i\in S_1}df_i\wedge\bigwedge_{i\in S_2}df_i$ is a $2(m-p)$-form. Restricted to $\p^pU_x$ this vanishes identically, since $2(m-p)>m-p$ as soon as $m-p\ge1$ (always, as $p<m$) exceeds the dimension of the stratum. So a product of two or more tangential wedge blocks pulls back to zero on $\p^pM$ and cannot contribute; at most one block of size $m-p$ occurs, and every other slot is order $0$ in the tangential directions.

\smallskip\noindent\emph{Transverse dependence.} Write $A_\al|_{U_x}=a_\al(u^1,\dots,u^p)\,\mu_0$. By the Reduction paragraph, $a_\al$ is supported on $U_x\cap\p M$ for every $\al$ still in play, and by the previous paragraph $R_{\mathrm{tang}}(\ga):=\sum_j\sum_{k>p}\ga_{j,k}\in\{0,m-p\}$. For each $i=1,\dots,p$, take $B_2=\nu\cdot\on{Id}_{m-p}$ and $B_1$ diagonal with only the $i$-th entry $B^i_i=-(m-p)\nu$ nonzero (so $B\in\mf{sl}(m,\R)$); then $\bar X_B(A_\al) = -(m-p)\nu\,u^i\p_{u^i}a_\al\cdot\mu_0$, the only place $\bar X_B$ meets the transverse dependence of $A_\al$. Repeating the matching-of-coefficients argument of Step~2 (now keeping the $\langle\bar X_B(A_\al),h\rangle$ term instead of discarding it) gives
\[
u^i\,\p_{u^i} a_\ga = \mu(\ga)\, a_\ga\,,\quad i=1,\dots,p\,,\qquad \mu(\ga) := \sum_j \ga_{j,i} - \frac{R_{\mathrm{tang}}(\ga)}{m-p}\,.
\]
Since $a_\ga$ is supported at $u^1=\dots=u^p=0$, each factor is (up to scale) a one-variable homogeneous distribution supported at the origin of $[0,\infty)$; these are exactly $\de^{(j)}(u^i)$, $j\ge0$, of degree $-(j{+}1)$ -- so $\mu(\ga)$ must be a negative integer, but homogeneity alone does not yet select $j=0$: for $R_{\mathrm{tang}}(\ga)=k(m-p)$ (a product of $k$ tangential blocks, $k\ge1$) and no transverse order, $\mu(\ga)=-k$ is a perfectly good negative integer for every $k\ge1$, matching $j=k-1$. What rules out $j\ge1$ (equivalently $k\ge2$, or any actual transverse order $\sum_j\ga_{j,i}\ge1$ paired with $j=\sum_j\ga_{j,i}+R_{\mathrm{tang}}/(m-p)-1\ge1$) is the same obstruction as in the previous paragraph, now applied to the normal direction instead of the tangential ones: pairing against $\de^{(j)}(u^i)$ extracts the $j$-th normal derivative at the corner, and unlike bare restriction ($j=0$, coordinate-free), singling out a specific order of normal derivative needs a preferred transverse parametrization -- which $\mu_0$, trivializing only the full $\Lambda^mT^*M$, does not supply. So $j=0$ is the only possibility: $\mu(\ga)=-1$, forcing $R_{\mathrm{tang}}(\ga)=m-p$ (exactly one wedge block, as the previous paragraph already limits it to at most one) and $\sum_j\ga_{j,i}=0$ for every $i$ (no transverse derivatives), giving $a_\ga=C\,\de(u^1)\cdots\de(u^p)$.

\smallskip\noindent\emph{Reassembly.} Thus $a_\ga\,\mu_0 = C\,\de(u^1)\cdots\de(u^p)\,du^1\wedge\dots\wedge du^m$ pairs against test functions to give exactly $C\int_{\p^pU_x} \big(\prod_{i\in R_0}f_i\big)\,\bigwedge_{j\in R_1}df_j$ with $R_1=S$, $|R_1|=m-p$ -- the boundary-stratum term of \ref{old8}. The local constants patch across good sets covering $\p^pM$ exactly as in the interior case, and summing over $p=1,\dots,m-1$ and the connected components $(\p^pM)_j$ gives the full boundary sum in the statement of the Lemma. This completes the proof.
\end{proof}
 
\subsection{End of the proof of the Main Theorem}\label{old9}
Let $\hat G$ be an invariant distribution in $\mathcal D'(M^n)$ and let $k<n/2$. Let 
$\{1,\dots,n\}=\{i_1,\dots,i_k\}\sqcup\{j_1,\dots,j_{n-k}\}$ be a partition into a disjoint union.

Without loss, let $\{i_1,\dots,i_k\}=\{1,\dots,k\}$. Let $(x_1,\dots,x_n)\in M^n$ be such that no $x_i$ for $1\le i\le k$ equals any of the $x_j$ with $k<j$. Choose open neighborhoods $U_{x_\ell}$ of $x_\ell$ in $M$ for all $\ell$ such that each $\overline{U_{x_i}}$ with $i\le k$ is disjoint from any $\overline{U_{x_j}}$ with $k<j$.
For smooth functions $f_\ell$ with support in $U_{x_\ell}$ for all $\ell$, as in the proof of lemma \ref{old6} we have that  for $i\le k$ all functions  $f_i$ vanish on $\bigcap_{j=1}^k (M\setminus U_{x_j})$, thus 
$\L_{X^{\text{diag}}}(\check G(f_1,\dots,f_k))|\big(\bigcap_{j=1}^k (M\setminus U_{x_j})\big)^{n-k} = 0$ for all $X\in\X_{\text{diag}}(M,\p M, \mu_0)$. 
 
For $k<j$ we have $\on{supp}(f_j)\subset U_{x_{j}}\subset \bigcap_{i=1}^k (M\setminus U_{x_i})$. 
Consider $f_1,\dots,f_k$ as fixed.
Using induction on $n$ and replacing $M$ by the submanifold (non-compact!) $\bigcap_{i=1}^k (M\setminus U_{x_i})$ we may assume that the Main Theorem \ref{maintheorem} is already true for 
$$\check G_c(f_1,\dots,f_k)|\big(\bigcap_{j=1}^k (M\setminus U_{x_j})\big)^{n-k}$$ 
so that 
\begin{align*}
&\check G_c(f_1,\dots,f_k)(f_{k+1},\dots,f_n) = \sum_\be C_\be(f_1,\dots,f_k)\cdot D_\be(f_{k+1}\dots f_n) 
\end{align*}
where each $D_\be$ is already in the degree $n-k$-summand of the non-commutative algebra generated by the basic invariants described in the Main Theorem. 
Now all the expressions $C(f_1,\dots,f_k)$ are again invariant under the diagonal action of $\Diff\big(\bigcap_{j=1}^k (M\setminus U_{x_j})\big)$, and we can subject it also to the induction hypothesis. All the resulting multilinear operators are defined on the whole of $M$. If we substract them from the original $\hat G_c$, the resulting distribution  
has support in the set of all
$(x_1,\dots,x_n)\in M^n$ such that $x_{i_k}= x_{j_{\ell(k)}}$ for an injective mapping $\ell:\{1,\dots,k\}\to \{1,\dots,n-k\}$.
 
Finally we end up with a distribution with support on the full diagonal $\{(x,\dots,x): x\in M\}\subset M^n$ whose form is determined by lemma \ref{old8}.
\qed


\end{document}